\begin{document}

\newtheorem{theorem}{Theorem}[section]
\newtheorem{lemma}[theorem]{Lemma}
\newtheorem{corollary}[theorem]{Corollary}
\newtheorem{conjecture}[theorem]{Conjecture}
\newtheorem{cor}[theorem]{Corollary}
\newtheorem{proposition}[theorem]{Proposition}
\theoremstyle{definition}
\newtheorem{definition}[theorem]{Definition}
\newtheorem{example}[theorem]{Example}
\newtheorem{claim}[theorem]{Claim}
\newtheorem{remark}[theorem]{Remark}

\newenvironment{pfofthm}[1]
{\par\vskip2\parsep\noindent{\sc Proof of\ #1. }}{{\hfill
$\Box$}
\par\vskip2\parsep}
\newenvironment{pfoflem}[1]
{\par\vskip2\parsep\noindent{\sc Proof of Lemma\ #1. }}{{\hfill
$\Box$}
\par\vskip2\parsep}


\newcommand{\R}{\mathbb{R}}
\newcommand{\T}{\mathcal{T}}
\newcommand{\C}{\mathcal{C}}
\newcommand{\G}{\mathcal{G}}
\newcommand{\Z}{\mathbb{Z}}
\newcommand{\Q}{\mathbb{Q}}
\newcommand{\E}{\mathbb E}
\newcommand{\N}{\mathbb N}

\newcommand{\barray}{\begin{eqnarray*}}
\newcommand{\earray}{\end{eqnarray*}}

\newcommand{\beq}{\begin{equation}}
\newcommand{\eeq}{\end{equation}}


\renewcommand{\Pr}{\mathbb{P}}
\newcommand{\as}{\text{a.s.}}
\newcommand{\Prob}{\Pr}
\newcommand{\Exp}{\mathbb{E}}
\newcommand{\expect}{\Exp}
\newcommand{\1}{\mathbf{1}}
\newcommand{\prob}{\Pr}
\newcommand{\pr}{\Pr}
\newcommand{\filt}{\mathscr{F}}
\DeclareDocumentCommand \one { o }
{%
\IfNoValueTF {#1}
{\mathbf{1}  }
{\mathbf{1}\left\{ {#1} \right\} }%
}
\newcommand{\Bernoulli}{\operatorname{Bernoulli}}
\newcommand{\Binomial}{\operatorname{Binom}}
\newcommand{\Beta}{\operatorname{Beta}}
\newcommand{\Binom}{\Binomial}
\newcommand{\Poisson}{\operatorname{Poisson}}
\newcommand{\Exponential}{\operatorname{Exp}}


\newcommand{\link}{\mbox{lk}}
\newcommand{\Deg}{\operatorname{deg}}
\newcommand{\vertexsetof}[1]{V\left({#1}\right)}
\renewcommand{\deg}{\Deg}
\newcommand{\oneE}[2]{\mathbf{1}_{#1 \leftrightarrow #2}}
\newcommand{\ebetween}[2]{{#1} \leftrightarrow {#2}}
\newcommand{\noebetween}[2]{{#1} \centernot{\leftrightarrow} {#2}}
\newcommand{\Gap}{\ensuremath{\tilde \lambda_2 \vee |\tilde \lambda_n|}}
\newcommand{\dset}[2]{\ensuremath{ e({#1},{#2})}}
\newcommand{\EL}{{ L}}
\newcommand{\ER}{{Erd\H{o}s--R\'{e}nyi }}
\newcommand{\zuk}{{\.{Z}uk}}


\newcommand{\frm}{\ensuremath{ 2\log\log m}}
\DeclareDocumentCommand \fuzz { o o }
{
\IfNoValueTF {#1}
  { \aleph_M }
  { \IfNoValueTF { #2 }
    { \aleph_{M}^{{#1}} }
    { \aleph_{M}^{{#1}}({#2})}
  }
}
\newcommand{\csubzero}{c_{0}}
\newcommand{\csubone}{c_{1}}
\newcommand{\csubtwo}{c_{2}}
\newcommand{\csubthree}{c_{3}}
\newcommand{\csubstar}{c_{*}}
\newcommand{\INE}{I^{\epsilon}}
\newcommand{\rsp}{1-C\exp(-md^{1/4}\log n)}
\newcommand{\lc}{\ensuremath{ \operatorname{light}(x,y)}}
\newcommand{\hc}{\ensuremath{ \operatorname{heavy}(x,y)}}
\DeclareDocumentCommand \pam { O{m} }
{
P_{{#1}}
}
\DeclareDocumentCommand \sca { O{j} }
{ Q_{ {#1} } }
\DeclareDocumentCommand \mga { O{j} }
{ A_{ {#1} } }
\DeclareDocumentCommand \mgb { O{j} O{c} }
{ Q_{ {#1} }^{ {#2} } }
\DeclareDocumentCommand \mgc { O{j} O{c} }
{ U_{ {#1} }^{ {#2} } }
\DeclareDocumentCommand \Filt { O{j} }
{ \mathscr{F}_{{#1}} }

\title{The Power of Choice combined with Preferential Attachment}
\author{Yury Malyshkin}
\address{Department of Mathematics and Mechanics, Moscow State University\\
Laboratory of Solid State Electronics, Tver State University}
\email{yury.malyshkin@mail.ru}
\author{Elliot Paquette}
\address{Department of Mathematics, Weizmann Institute of Science}
\email{paquette@weizmann.ac.il}
\thanks{YM gratefully acknowledges the support of the Weizmann Institute of Science, where this work was performed.
EP gratefully acknowledges the support of NSF Postdoctoral Fellowship DMS-1304057.
}
\date{\today}
\maketitle

\begin{abstract}
We prove almost sure convergence of the maximum degree in an evolving tree model combining local choice and preferential attachment.  
 At each step in the growth of the graph, a new vertex is introduced. 
A fixed, finite number of possible neighbors are sampled from the existing vertices with probability proportional to degree.  Of these possibilities, the vertex with the largest degree is chosen.  The maximal degree in this model has linear or near-linear behavior.  This contrasts sharply with what is seen in the same choice model without preferential attachment.  The proof is based showing the tree has a persistent hub by comparison with the standard preferential attachment model, as well as martingale and random walk arguments.
\end{abstract}

\section{Introduction}
In the present work we further explore how the addition of choice affects the classic preferential attachment model (see~\cite{barabasi,KrReLe}), building on previous work \cite{DSKrM,MP13,KR13}. The preferential attachment graph is a time-indexed inductively constructed sequence of graphs, constructed the following way. We start with some initial graph and then on each step we add a new vertex and an edge between it and one of the old vertices, chosen with probability proportional to its degree. Many different properties of this model have been obtained in both the math and physics literature (see~\cite{barabasi, KrReLe,Mori, DvdHH}). 

In current work we are interested in the degree distribution and in particular in the maximal degree. For the preferential attachment model this problem is studied in~\cite{FFF04,Mori}.  It is shown in \cite{Mori} that the maximum degree $\Delta(t)$ at time $t$ has that $\Delta(t)t^{-1/2}$ converges almost surely to a non-degenerate absolutely continuous distribution.
In~\cite{MP13}, limited choice is introduced into the preferential attachment model. More specifically, at each step we independently choose $2$ (or $d$ in general) existing vertices with probability proportional to degree and connect the new vertex with the one with smaller degree. In~\cite{MP13} it is shown that the maximal degree at time $n$ in such a model will be $\log\log n/\log 2 + \Theta(1)$ with high probability ($\log\log n/\log d$ in case of $d$ choices).  There, it is also conjectured by the present authors that if we choose the vertex with the higher degree, the maximal degree will be of order $n/\log n$. Subsequently, this is studied in the physics literature~\cite{KR13}, where the analysis is expanded to show that for $d=2$ this is indeed the case while for $d>2,$ the maximal degree has linear order.

We will give exact first-order asymptotics for the maximal degree in the max-choice model and show almost sure convergence of the appropriately scaled maximal degree.  We now describe the model in more detail.

Define a sequence of trees $\{ \pam \}$ given by the following rule.  Let $P_1$ be the one-edge tree.  Given $P_{m-1},$ define $P_m$ by first adding one new vertex $v_{m+1}$.  Let $X^1_m,\ldots,X^d_m$, where $d\geq 2$, be i.i.d. vertices from $\vertexsetof{ \pam },$ where $\vertexsetof{P}$ is the set of vertices of $P$ chosen with probability
\[
\Pr \left[
X^1 = w
\right] = \frac{\deg w}{2m}.
\]
Note that as the graph has $m$ edges, $\sum_{w}\deg w=2m$. Finally, create a new edge between $v_{m+1}$ and $Y_m,$ where $Y_m$ is whichever of $X^1_m$,...,$X^d_m$ has larger degree. In the case of a tie, choose according to an independent fair coin toss.  We call this the \emph{max-choice preferential attachment tree}.

Let us formulate our main theorem:
\begin{theorem}
\label{thm:max_degree}

In the case $d=2,$ the maximum degree $M_n$ of $P_n$ has
\[
\lim_{n\to\infty} \frac{M_n\log n}{n} = 4
\]
a.s. For $d>2$ the maximum degree $M_n$ of $P_n$ has
\[
\lim_{n\to\infty} \frac{M_n}{n} = x_{\ast}
\]
a.s., where $x_{\ast}$ is the unique positive solution of equation $1-(1-x/2)^{d}=x$ in the interval $0\leq x\leq 2$.
\end{theorem}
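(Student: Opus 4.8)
The plan is to establish first the \emph{persistent hub} property: almost surely there are a (random) vertex $v$ and a (random) time $T$ such that $v$ strictly maximizes the degree in $P_m$ for every $m\ge T$. Granting this reduces the theorem to the study of the single integer-valued process $L_m:=\deg_v(P_m)$, since then $M_m=L_m$ for $m\ge T$. The persistent hub is also the step I expect to be the main obstacle. The approach to it is to compare with the ordinary preferential attachment tree, where a persistent hub is classical, exploiting that the max-choice rule only reinforces the ``rich get richer'' effect (the vertex picked in a max-choice step dominates in degree the vertex picked by a single preferential draw). Concretely: (i) show that a strict leader $v$ eventually emerges with a substantial lead, using the superlinear feedback $\Pr[v\text{ picked}\mid\filt_m]=1-(1-\deg_v(P_m)/2m)^d$; (ii) show that the gap $\deg_v(P_m)-\max_{w\ne v}\deg_w(P_m)$ is, once positive and large, a submartingale with strictly positive drift — because $v$ is picked whenever it is sampled, whereas any competitor additionally needs $v$ not to be sampled — while controlling the competitors' degrees uniformly (each grows at a strictly smaller rate) by martingale tail bounds and a union bound over the $O(m)$ vertices; (iii) conclude by Borel--Cantelli that the identity of the maximum-degree vertex changes only finitely often. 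The genuinely delicate part is the transient early phase, where several vertices have comparable small degrees and the leader can flip; there I would condition on an initial segment long enough that a strict, well-separated leader has appeared (a positive-probability event), run the gap estimate from that time, and remove the conditioning by a zero--one argument.

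With the persistent hub in hand, for $m\ge T$ the vertex $v$ gains the new edge exactly when it is one of the $d$ i.i.d.\ sampled vertices (being the strict maximum, it wins every comparison it enters), so, with $\filt_m$ the natural filtration,
\[
\Exp[L_{m+1}-L_m \mid \filt_m] = 1-\Bigl(1-\frac{L_m}{2m}\Bigr)^{d}, \qquad L_{m+1}-L_m\in\{0,1\}.
\]
Writing $\ell_m=L_m/m$ and $f(x)=1-(1-x/2)^d$, this rearranges to a Robbins--Monro recursion $\ell_{m+1}-\ell_m=\tfrac{1}{m+1}(\xi_{m+1}-\ell_m)$ with $\Exp[\xi_{m+1}\mid\filt_m]=f(\ell_m)$ and $|\xi_{m+1}|\le1$, whose companion ODE in the time scale $s=\log m$ is $\dot\ell=f(\ell)-\ell=:g(\ell)$. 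The theorem is exactly the assertion that this ODE, and the fluctuations about it, behave as described below in the two regimes.

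\textbf{Case $d>2$.} Here $g$ is concave on $[0,2]$ with $g(0)=0$, $g'(0)=d/2-1>0$ and $g(2)=-1$, so $g$ has exactly two zeros in $[0,2]$, namely $0$ and $x_*$, with $0$ repelling and $x_*$ attracting ($g'(x_*)<0$). Standard stochastic-approximation theory then gives $\ell_m\to\{0,x_*\}$ a.s., and it remains to exclude $\ell_m\to0$. For that I would use the strict repulsion at $0$: since $g(\ell)\ge c_0\ell$ on $(0,x_*/2]$ for some $c_0>0$, the stopped process $\ell_{m\wedge\tau}\big/\prod_{k\le m\wedge\tau}(1+c_0/k)$, with $\tau$ the hitting time of $(x_*/2,1]$, is a bounded submartingale, which forces $\tau<\infty$ a.s.\ (otherwise $\ell_m$ would have to grow like $m^{c_0}$); combined with a crude a priori estimate (while $\ell_m\le2/d$ one has $\Exp[L_{m+1}-L_m\mid\filt_m]\ge\tfrac14 dL_m/m$, so $L_m$ grows at least polynomially off any head start) this pins $\ell_m$ into a neighbourhood of $x_*$, after which the attracting case of the convergence theorem applies. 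Hence $M_m/m=\ell_m\to x_*$ a.s.

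\textbf{Case $d=2$.} Now the equilibria collide: $g(x)=-x^2/4$, so $\ell_m\to0$, and the ODE $\dot\ell=-\ell^2/4$ has $1/\ell(s)\sim s/4$, which suggests the scale $L_m\sim4m/\log m$. To prove it I would change variables to $R_m=m/L_m$ and, using $\Exp[\xi_{m+1}\mid\filt_m]=\ell_m-\ell_m^2/4$ together with the exact one-step formulas for $R_{m+1}-R_m$ (namely $1/L_m$ if $v$ is not picked and $\tfrac{L_m-m}{L_m(L_m+1)}$ if it is), expand to obtain
\[
\Exp[R_{m+1}-R_m\mid\filt_m]=\frac{1}{4m}+(\text{summable lower-order terms}).
\]
Since the increments of $R_m$ are $O((\log m)^2/m)$, the martingale part of $R_m$ has square-summable increments and converges a.s.; adding the drift gives $R_m=\tfrac14\log m+O(1)$, whence $R_m/\log m\to\tfrac14$, i.e.\ $M_m\log m/m\to4$ a.s. As usual the drift expansion is self-consistent only after a crude two-sided a priori bound ($m/(\log m)^2\ll L_m\le m$) has been established — the lower half again coming from the multiplicative growth $\Exp[L_{m+1}\mid\filt_m]\ge L_m(1+\tfrac{3}{4m})$ and the upper half from $\ell_m\to0$ — so a short bootstrap precedes it.
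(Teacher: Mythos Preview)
Your overall architecture matches the paper's: establish a persistent hub, then analyze a single degree process via stochastic approximation. For $d>2$ your Robbins--Monro reduction and the exclusion of the trap at $0$ are essentially what the paper does (Lemmas~\ref{lem:interval_est}--\ref{lem:final_est}), so no issues there.

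The persistent hub sketch, however, has a real gap. Your plan is to show the gap $\deg_v-\max_{w\ne v}\deg_w$ is a submartingale with positive drift once large, control competitors by a union bound over the $O(m)$ vertices, and then ``remove the conditioning by a zero--one argument.'' Two problems: the competitor set grows with $m$, so a union bound requires the catch-up probability of a vertex \emph{born at time $k$} to be summable in $k$---for that you need an \emph{a priori} polynomial lower bound on $M_k$ (the paper proves $M_k\gg k^{3/8}$, Lemma~\ref{lem:starting_low_bound}, then bounds the catch-up probability by $P(M_k)2^{-M_k}$ via a P\'olya-urn comparison, Lemma~\ref{lem:change_of_leader}); you never state or use such a bound. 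Second, the zero--one step is unjustified: the event ``a persistent hub exists'' is a tail event, but the increments are state-dependent and neither Kolmogorov nor Hewitt--Savage applies; showing it has positive probability from some configuration does not upgrade to probability~$1$ without further work. The paper avoids this entirely by proving directly that (i) only finitely many vertices are ever maximal (Borel--Cantelli on the summable catch-up probabilities), and (ii) any fixed pair swaps leadership finitely often, by coupling the pair to a P\'olya urn whose limiting proportion is a.s.\ not $1/2$ (Lemma~\ref{lem:walk_domination}, Lemma~\ref{lem:change_of_leadership}).

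For $d=2$ your route via $R_m=m/M_m$ is genuinely different from the paper's, which instead shows that $\exp(cn/M_n)/n$ and $n\exp(-cn/M_n)$ are (stopped) supermartingales for $c<4$ and $c>4$ respectively (Lemma~\ref{lem:mgs}) and invokes positive supermartingale convergence. Your approach also works, but the bootstrap you quote is circular: ``increments of $R_m$ are $O((\log m)^2/m)$'' already presumes $M_m\asymp m/\log m$, and ``$\Exp[L_{m+1}\mid\filt_m]\ge L_m(1+3/(4m))$'' only yields $M_m\gtrsim m^{3/4}$, not $m/(\log m)^2$. What actually saves you is that the \emph{conditional variance} of $R_{m+1}-R_m$ is $p_m\cdot O(m^2/M_m^4)=O(m/M_m^3)$, which is summable as soon as $M_m\gg m^{2/3}$; the paper's a~priori bound $M_m\gg m^{3/4-\delta}$ (Lemma~\ref{lem:improved_low_bound2}) is therefore sufficient, and the drift expansion $\Exp[R_{m+1}-R_m\mid\filt_m]=1/(4m)+O(1/M_m^2)+O(1/(mM_m))$ then gives the result. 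So the $d=2$ argument is salvageable once you state the right a~priori input, but as written the justification is off.
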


Our proof is based on the existence of a \emph{persistent hub}, i.e. a single vertex that in some finite random time becomes the highest degree vertex for all time after. Using this, instead of analyzing the maximum degree over all vertices we effectively only need to analyze the degree of just one vertex.

\begin{proposition}
\label{prop:persistent_hub}
There exists random $N$ and $K$ that are finite almost surely so that at any time $n \geq N$, the vertex $v_K$ has the highest degree among all vertices.
\end{proposition}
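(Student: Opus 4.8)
The plan is to track the gap between the two largest degrees,
\[
\Gamma_m := M_m - \bigl(\text{second-largest degree of } P_m\bigr),
\]
and to show $\Gamma_m \to \infty$ almost surely. This suffices: $M_m$ is nondecreasing, since at each step one old degree rises by one and a degree-one vertex is added, neither of which lowers the maximum; and if $\Gamma_m \geq 1$ for all $m \geq N$, then the strict leader $v_K$ at time $N$ can never be caught, because every other vertex has degree at most $M_m-1$ while degrees move by at most one per step, so a reversal would force a tie and hence $\Gamma=0$ at a later time. I would first dispose of $M_m\to\infty$ a.s.\ by conditional Borel--Cantelli: a current maximum-degree vertex $\ell$ lies among the $d$ samples with conditional probability $1-(1-M_m/2m)^d\geq M_m/2m\geq 1/2m$, and on that event the new edge attaches to a vertex of degree at least $M_m$, so $M_{m+1}>M_m$; since $\sum 1/2m=\infty$, this happens infinitely often.

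The heart of the matter is a drift estimate for $\Gamma_m$. While $\ell$ is the strict leader with runner-up $\ell'$, write $a=\deg\ell/2m$ and $b=\deg\ell'/2m$, so $a-b=\Gamma_m/2m$. The leader receives the new edge exactly when it is sampled, probability $1-(1-a)^d$; the runner-up receives it only when it is sampled and the leader is missed, probability $(1-a)^d-(1-a-b)^d$. Since $\Gamma_m$ has unit increments, subtracting gives
\[
\E[\Gamma_{m+1}-\Gamma_m\mid\mathscr{F}_m] \;=\; 1-2(1-a)^d+(1-a-b)^d \;\geq\; c(d)\Bigl(\frac{\Gamma_m}{m}+\frac{(\deg\ell')^2}{m^2}\Bigr)
\]
for a constant $c(d)>0$ (for $d=2$ the left side equals $2u-u^2+2b^2$ with $u=a-b$, which makes this transparent). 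So $\Gamma_m$ has a positive drift proportional to its current value -- the same mechanism producing a persistent hub in ordinary preferential attachment -- plus a strictly positive correction, while its increments are nonzero only with conditional probability $O(M_m/m)$.

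To convert this into an almost-sure statement I would bring in the comparison with the standard preferential attachment tree to get the a priori bound $M_m\geq\sqrt m$ for all large $m$ a.s.\ (the max-choice leader is reinforced at least as strongly as a preferential-attachment leader, so M\'ori's $\Theta(\sqrt m)$ law transfers), and then run a dyadic-window argument. Over a block $[2^k m_0,2^{k+1}m_0]$ the accumulated drift is at least a fixed multiple of $\Gamma_{2^k m_0}$ together with the contribution of the $(\deg\ell')^2/m^2$ term, while the compensated (martingale) part of $\Gamma$ has unit increments and conditional quadratic variation $O(M_{2^{k+1}m_0})$; Freedman's inequality then bounds the probability that the fluctuation cancels the drift -- letting $\Gamma$ fall back toward $0$ -- by a quantity summable in $k$, once $\Gamma$ has been bootstrapped above a fixed level. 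Hence only finitely many blocks are bad, $\Gamma_m\to\infty$ a.s., and moreover $\Pr[\,\Gamma\text{ ever drops below }h/2\text{ after first exceeding }h\,]\to 0$ as $h\to\infty$, which is the quantitative form one wants for the conclusion.

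The step I expect to be the main obstacle is the regime where $\Gamma_m$ is small compared with $\sqrt{M_m}$: this occurs just after any lead change, and -- crucially -- for $d=2$, where $M_m/2m\to 0$, it persists in a scaled sense, so the leader and runner-up can run nearly neck-and-neck. There the martingale fluctuations dominate the drift over a single window and $\Gamma$ behaves diffusively, so one must argue that the small but never-vanishing drift (at least of order $1/m$ per step while $\Gamma\geq1$) nonetheless carries $\Gamma$ away from $0$ before a further lead change, and one must separately handle the boundary behaviour at $\Gamma=1$ and the changing identity of the runner-up. A related point of care, again because the degree fractions vanish when $d=2$, is that every estimate must be quantitative in $m$ rather than phrased in terms of a fixed positive fraction of the total degree.
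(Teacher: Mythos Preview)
Your approach is genuinely different from the paper's, and the obstacle you yourself flag is a real gap, not a technicality.

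The paper never tracks $\Gamma_m$. Instead it decouples the problem into two pieces, each of which avoids the diffusive regime. First, it shows only finitely many vertices are \emph{ever} leaders: for any pair of vertices, the evolution of their two degrees is compared to a standard P\'olya urn $(A_n,B_n)$ (the comparison being that the higher-degree vertex of the pair is favoured at least as strongly as in the urn), and a newcomer at time $k$ starts at $(M_k,1)$, so the probability it ever reaches the diagonal is at most $P(M_k)2^{-M_k}$ by a known urn estimate; the a~priori bound $M_k\gtrsim k^{3/8}$ makes this summable. Second, for any two fixed vertices, the same urn comparison gives that $A_n/(A_n+B_n)$ converges a.s.\ to a Beta variable, hence is a.s.\ not $1/2$, so the two degrees are equal only finitely often. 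Finitely many candidates, each pair tied finitely often, done. The point is that the urn comparison trades your drift--fluctuation competition for a limit-distribution statement that is insensitive to how slowly the gap opens up.

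Your dyadic-window argument does not close as written. With only the a~priori bound $M_m\ge m^{3/4-\delta}$ (the sharp $m/\log m$ bound \emph{uses} the persistent hub, so you cannot invoke it here), the accumulated drift from the $(\deg\ell')^2/m^2$ term over $[m,2m]$ is of order $M_m^2/m\ge m^{1/2-2\delta}$, while the martingale quadratic variation over the same window is of order $M_{2m}$, giving fluctuations up to order $m^{1/2}$; these are the same scale, and Freedman gives nothing summable. The $\Gamma_m/m$ part of the drift is even weaker when $\Gamma_m=O(1)$. So the bootstrap from $\Gamma_m=O(1)$ to $\Gamma_m\gg\sqrt{M_m}$ is exactly the missing idea, and it is not clear how to supply it without something like the paper's urn comparison. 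There is also a point you underweight: when several vertices are tied at the runner-up degree, the correct drift is $1-2(1-a)^d+(1-a-kb)^d$ with $kb$ the total runner-up mass, and for $d=2$ this equals $(a-kb)(2-a-kb)+2a\cdot kb$, which is \emph{negative} once $kb$ exceeds $a$ by enough; so near $\Gamma_m=1$ the process need not even be a submartingale, and the ``small but never-vanishing positive drift'' picture is not correct without first controlling the number of near-leaders.
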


Let $L_n$ denote the number of vertices at time $n$ that have maximal degree.  The dynamics of $M_n$ are given by the rule
\begin{equation}
\label{eq:master}
M_{n+1}-M_n = \begin{cases}
1 & \Pr = 1 - \left(1-\frac{M_nL_n}{2n}\right)^{d} \\
0 & \text{else.}
\end{cases}
\end{equation}
The effect of Proposition~\ref{prop:persistent_hub} is that for some $N < \infty$ random and sufficiently large, $L_n = 1$ for all $n > N.$  If we were to assume that $L_n$ were identically one, we effectively consider a simple multi-choice urn.

This urn contains $2$ types of balls, colored black and colored white, with the number of black balls corresponding to $M_n$ and the number of white balls being $2n - M_n.$  
At every time step, $d$ balls are sampled from the urn with replacement and then put back into the urn.  If all are white, then two white balls are added back to the urn.  If at least one is black, then one white ball and one black ball are added to the urn.  Such urn models with multiple samplings have appeared recently in the literature (see~\cite{Kuba13,Chen05}), although this appears to be an uncovered case.

%
\subsection*{Proof approach and organization}

We start in section~\ref{sec:apriori} with some initial lower-bound estimates for the maximal degree.  All subsequent arguments require that the maximal degree grows quickly enough to ensure deterministic behavior takes over.

In section \ref{sec:hub} we prove the existence of the persistent hub, which allows us to consider the degree of a single vertex instead of the maximal degree. The argument follows the proof of~\cite{Galashin} for convex preferential attachment models and consists of two steps.  First, we show that the number of \emph{possible leaders}, vertices that have maximal degree at some time, is almost surely finite; this follows on account of the maximal degree growing quickly enough that vertices added after a long time have a very small probability of ever catching up.  Second, we show that any two vertices have degrees that change leadership only finitely many times.  These arguments rely heavily on comparison with the preferential attachment model and the P\'olya urn respectively.

In sections \ref{sec:final2} and \ref{sec:finald} we prove convergence of the scaled maximal degree in the cases $d=2$ and $d>2$ respectively, which require different analyses.  From~\eqref{eq:master}, we anticipate the maximal degree $M_n$ of the graph evolves according to the differential equation
\begin{equation*}
  \frac{dM}{dt} =  1-(1-M/2t)^d. 
\end{equation*}
Setting $u(t) = M(e^t)e^{-t},$ we get that $u$ satisfies the autonomous differential equation
\[
  u' + u = 1 - (1-u/2)^d.
\]
In the case $d=2,$ this can be explicitly solved to give $M(t) = 4t/(\log t + C),$ while in the $d>2$ case, we are led to consider critical points, which are solutions of $1-(1-x/2)^{d} = x.$
When $d>2$ there are two solution of the equation $1-(1-x/2)^{d}=x$ in the interval $0\leq x\leq 2,$ but it only has one stable solution $x_{\ast}$ (meaning that $u'$ has the opposite sign of $u-x_{\ast}$ in a neighborhood of $x_{\ast}$).

In section \ref{sec:final2} we prove the $d=2$ case by considering explicit scale functions of $M_n$ that can be guessed from the solution of the differential equation.  
In section \ref{sec:finald}, we prove the $d >2$ case, which can be formulated generally as follows.  Consider a continuous function $q : [0,1] \to [0,1]$ and define a process $\{T(n), n\geq n_{0}\}$, started from point $T(n_0)=T_0, 0<T_0<n_0$, such that the increments $T(n+1)-T(n)$ are independent $\Bernoulli(q(T(n)/n))$ variables conditioned on $\sigma(T_n).$  This problem has appeared many times in the stochastic approximation literature under the name of the Robbins-Monro model (see~\cite{KushnerBook} or \cite{Benaim}).  Off the shelf techniques are nearly applicable to the situation for $M_n/n,$ but still require that we show that $M_n/n$ are in a neighborhood of $x_{\ast}$ infinitely often, which is the bulk of the work here.  We then give a quick random walk argument to show that $M_n/n$ converges to $x_{\ast}.$ 

\section{Discussion}

Theorem \ref{thm:max_degree} allows us to complete Table~\ref{tab:results} about the influence of choice on the maximum degree of growing random trees. In summary, for the min-choice models, the effect of the choice completely overwhelms the extra effect of the preferential attachment.  On the other hand, the combined effect of preferential attachment with max-choice completely changes the structure of the graph and the order of the maximum degree (see also Figure~\ref{fig:rendering} for a simulation of these trees).  In comparison, adding max-choice to the uniform attachment model produces only a quantitative increase in the maximum degree.

Theorem \ref{thm:max_degree} along with Proposition \ref{prop:persistent_hub} provide us information about the degree sequence of the graph and some structural information about the graph, but it would be nice to know more topological information about the tree. One natural topological property to consider is the diameter of the tree.  

\ctable[
  notespar,
  caption = {Comparison of max/min-choice for $2$ choices with preferential or uniform attachment.   }, 
	label = {tab:results},
	pos = t
]{r | c c c}{
  \tnote[(a)]{ \cite{Mori}}
  \tnote[(b)]{ \cite{MP13}}
  \tnote[(c)]{ \cite{DSKrM}}
  \tnote[(d)]{ To our knowledge, this is not claimed formally anywhere.  However, getting the correct order is an elementary exercise. }
}{ \FL
& max-choice & no-choice & min-choice \ML
Preferential attachment & 
$\frac{4n}{\log n}(1+o(1))$ & 
$\Theta(n^{1/2})$ \tmark[(a)]& 
$\frac{\log\log n}{\log 2} + \Theta(1)$ \tmark[(b)] \NN
Uniform attachment & 
$O(\log n)$ \tmark[(c)]& 
$O(\log n)$ \tmark[(d)] & 
$O(\log\log n)$ \tmark[(c)] \LL
}

In the standard preferential attachment model the diameter is known to be logarithmic~\cite{DvdHH}.  It is natural to wonder if the diameter in this situation is smaller. To increase the diameter we must add an edge between a new vertex and an existing vertex of degree-1. In the max-choice model, choosing such a vertex is still not too rare; for while it is less likely to choose a degree-1 vertex than in preferential attachment, there are $\Theta(n)$ degree one vertices.  Thus, degree-1 vertices are selected at each time step with some probability bounded away from $0.$  Conditional on choosing a vertex of degree $1,$ the exact choice of vertex is uniform over all possible choices.  Thus we conjecture the diameter of the graph grows at a rate that is commensurate to that of the preferential attachment model.  

The rate could be different if we change the rule of picking the vertex in the case of a tie.  The model we study breaks ties uniformly, but in fact any tie breaking rule have the same degree sequence evolutions in law.  However, it could significantly affect the structure of the graph. For example, if instead of a fair coin toss we define a function $\operatorname{rad}(v_{j})=\max_{i}(dist(v_{i},v_{j}))$, and on each step we choose the vertex with the smallest value of $\operatorname{rad}(v_{i})$ among all vertices with the same degrees, we anticipate something like order $\log\log n$ diameter (see also~\cite{KrReLe}, where such a model is considered).

In the model we study here, we consider only graphs that are trees, and we believe that similar results should hold for classes on non-tree models.  One such natural model would be to add more than one edge at each step.  A second would be to flip a coin at each time step to choose between adding a new vertex or adding an edge between existing vertices with probability.  If adding a vertex, the rule would be the same as in our model, while for adding an edge there are a few natural possibilities that could affect structure of the graph. Here is one of such rules. We choose the first vertex with probability proportional to the degrees of the vertices of the graph (which is preferential attachment without choice), and then we choose the second vertex among all non-adjacent vertices using the max-$d$ choice rule. In this case the degree distribution we anticipate max-degree behavior to match the tree model.  Note that both these methods will only increase the average degree of the vertices of the graph.

\begin{figure}[h]
\centering
	\begin{subfigure}[t]{0.30\textwidth}
	\includegraphics[width=\textwidth]{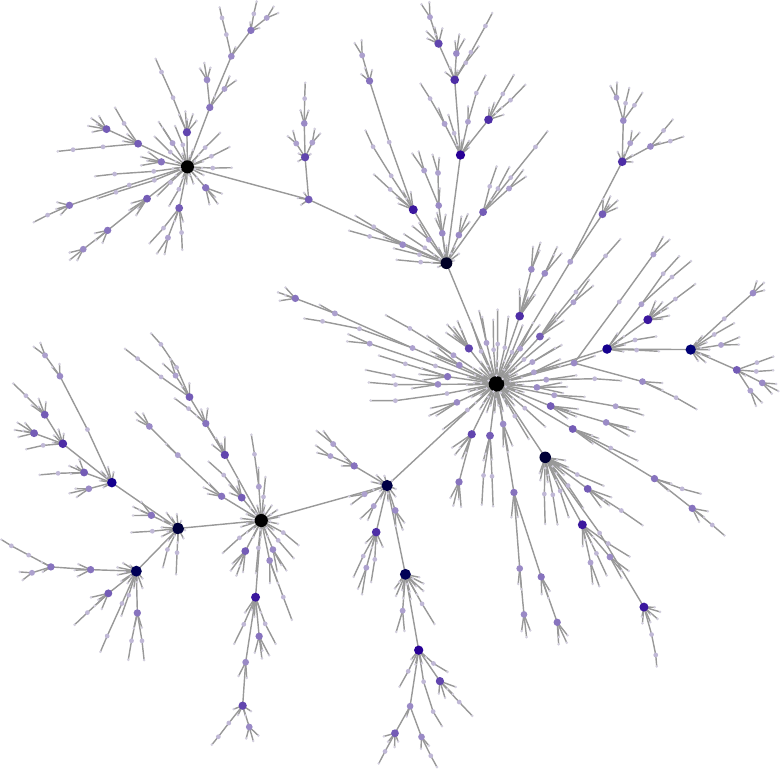}
	\caption{The preferential attachment tree.}
	\end{subfigure}
	\begin{subfigure}[t]{0.30\textwidth}
	\includegraphics[width=\textwidth]{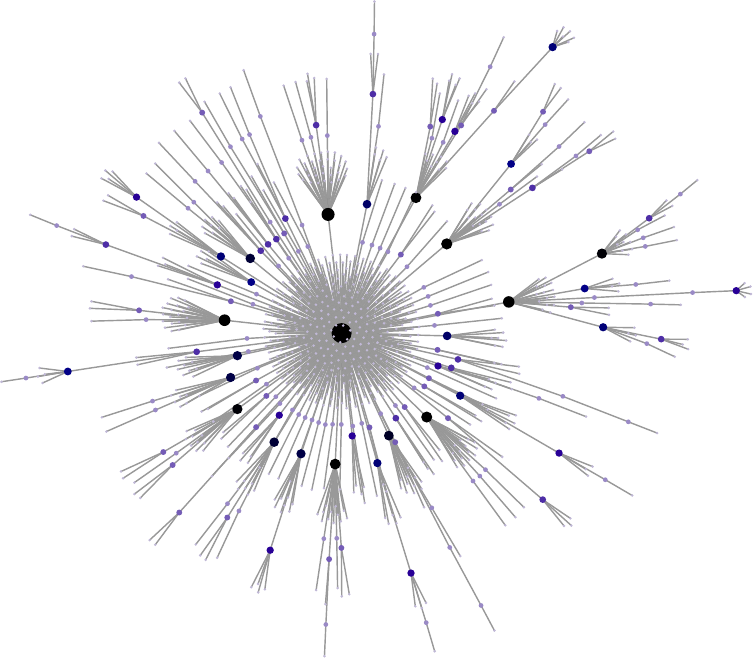}
	\caption{The max-choice preferential attachment tree.}
	\end{subfigure}
	\begin{subfigure}[t]{0.30\textwidth}
	\includegraphics[width=\textwidth]{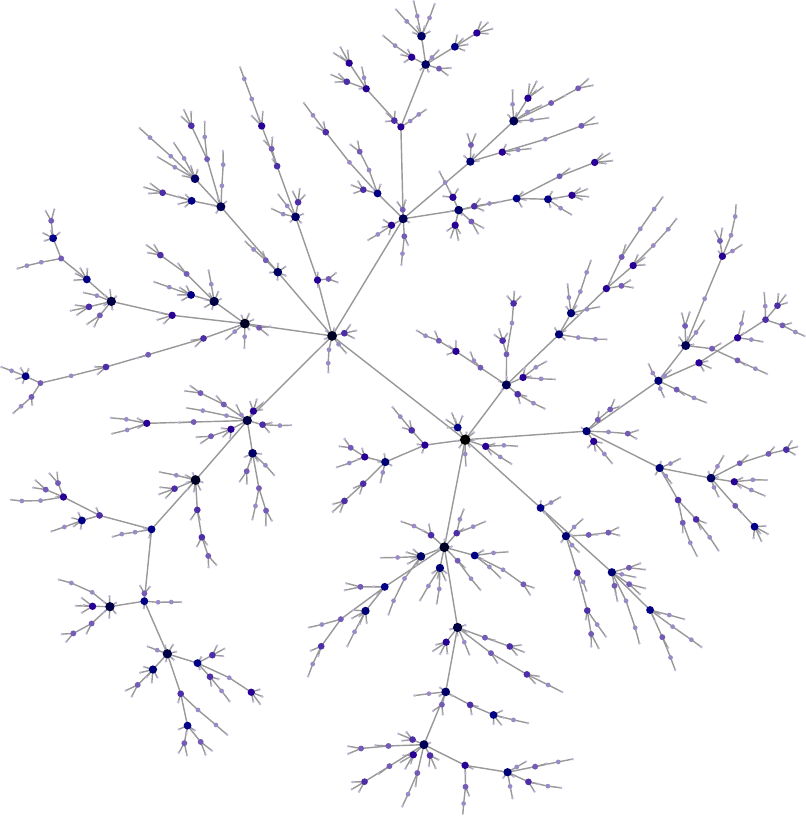}
	\caption{The max-choice uniform attachment tree.}
	\end{subfigure}
	\caption{All renderings are with $1000$ vertices.}
	\label{fig:rendering}
\end{figure}

\section{\emph{A priori} estimates}
\label{sec:apriori}
We begin with a pair of lower bounds for the growth of the maximal degree.  These are needed both for the persistent hub proof and the eventual precise estimates.
We will frequently use the following lemma of~\cite{Galashin}.
\begin{lemma}
\label{lem:numbers}
Suppose that a sequence of positive numbers $r_n$ satisfies
\[
r_{n+1} = r_n\left(1+\frac{\alpha}{n+x}\right),~n \geq k
\]
for fixed reals $\alpha > 0,$ $k > 0$ and $x.$  Then $r_n/n^{\alpha}$ has a positive limit.
\end{lemma}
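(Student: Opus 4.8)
The plan is to take logarithms and compare the resulting sum with $\alpha\log n$. Since every $r_n$ is positive, each factor $1+\frac{\alpha}{j+x}$ is positive for $j\geq k$, and for all sufficiently large $j$ we have $j+x>0$. Discarding the finitely many initial steps for which this might fail only multiplies $r_n$ by a fixed positive constant, which does not affect whether $r_n/n^\alpha$ has a positive limit; so we may as well assume $j+x>0$ for all $j\geq k$. Then $\log r_n=\log r_k+\sum_{j=k}^{n-1}\log\bigl(1+\tfrac{\alpha}{j+x}\bigr)$, and the Taylor estimate $\log(1+u)=u+O(u^2)$ applied with $u=\alpha/(j+x)\to 0$ gives $\log\bigl(1+\tfrac{\alpha}{j+x}\bigr)=\tfrac{\alpha}{j+x}+O(j^{-2})$, where the series $\sum_j O(j^{-2})$ converges absolutely.

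For the main term, $\sum_{j=k}^{n-1}\tfrac{\alpha}{j+x}=\alpha\log n+c+o(1)$ as $n\to\infty$ for a constant $c=c(k,x,\alpha)$: this follows from the harmonic sum asymptotic $\sum_{j=1}^{m}\tfrac1j=\log m+\gamma+o(1)$ together with the convergence of $\sum_j\bigl(\tfrac{1}{j+x}-\tfrac1j\bigr)$, whose summand is $O(j^{-2})$. Combining the two pieces shows that $\log r_n-\alpha\log n$ converges to a finite limit $\ell$, hence $r_n/n^\alpha\to e^{\ell}>0$, as claimed. Alternatively, one can avoid all the $o(1)$ bookkeeping by writing the product in closed form, $r_n=r_k\prod_{j=k}^{n-1}\tfrac{j+x+\alpha}{j+x}=r_k\,\tfrac{\Gamma(k+x)}{\Gamma(k+x+\alpha)}\cdot\tfrac{\Gamma(n+x+\alpha)}{\Gamma(n+x)}$, and then invoking the standard consequence of Stirling's formula that $\Gamma(n+a)/\Gamma(n+b)\sim n^{a-b}$ with $a=x+\alpha$, $b=x$, which immediately yields $r_n\sim r_k\,\tfrac{\Gamma(k+x)}{\Gamma(k+x+\alpha)}\,n^\alpha$.

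This lemma is essentially a routine asymptotics computation, so there is no real obstacle; the only points that require a moment's care are the reduction to $j+x>0$ (and the observation that the finitely many exceptional initial factors are harmless, since they contribute a bounded positive factor) and the verification that the error series $\sum_j O(j^{-2})$ and $\sum_j\bigl(\tfrac{1}{j+x}-\tfrac1j\bigr)$ both converge. Both are immediate, so I would present the Gamma-function version as the cleanest route and relegate the logarithm-sum estimate to a remark.
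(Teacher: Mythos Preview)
Your proof is correct; both the logarithm-sum estimate and the Gamma-function closed form are valid routes, and your handling of the finitely many indices with $j+x\le 0$ is fine since the hypothesis that all $r_n>0$ forces each factor to be positive. The paper itself does not give a proof beyond the remark ``This is easily checked from a direct computation,'' so your argument is precisely the kind of computation being invoked.
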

This is easily checked from a direct computation.
We will use $\Filt$ denote the natural filtration for the whole tree, i.e.
\(
\Filt = \sigma( \pam[1], \pam[2], \ldots, \pam[n]).
\)
With respect to this filtration, both $M_n$ and $L_n$ are measurable.
\begin{lemma}
\label{lem:starting_low_bound}
With probability $1,$
\[
\inf_{n} M_n/n^{3/8} > 0.
\]
\end{lemma}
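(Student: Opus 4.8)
The plan is to track the degree of a single, fixed vertex—say $v_3$, the first vertex added after the initial one-edge tree $P_1$—and show that its degree alone already grows like a positive power of $n$. Since $M_n$ dominates $\deg_n(v_3)$, a lower bound on the latter suffices. Let $D_n = \deg_n(v_3)$ once $v_3$ has been created. The key observation is a one-sided comparison: regardless of what the rest of the tree looks like, the probability that $v_3$ receives the new edge at step $m$ is \emph{at least} the probability that all $d$ samples $X^1_m,\dots,X^d_m$ land on $v_3$, which is $(D_m/2m)^d$—but this is too weak. Instead, I would use the better bound that $v_3$ receives the edge whenever \emph{at least one} sample hits $v_3$ and $v_3$ happens to be the max; at minimum, $v_3$ gets the edge with probability at least $D_m/(2m) \cdot (\text{something})$. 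The cleanest route: the event that $v_3$ is chosen contains the event that $X^1_m = v_3$ and $v_3$ beats the other $d-1$ samples; more crudely still, $\Pr[Y_m = v_3 \mid \Filt[m]] \geq \Pr[X^1_m = \cdots = X^d_m = v_3 \mid \Filt[m]] = (D_m/2m)^d$ is valid but gives only polylogarithmic growth.

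The right comparison is with ordinary preferential attachment: condition on the realized sequence and note that at each step the chance $v_3$ gains degree is at least the chance it would gain degree in the one-choice preferential attachment tree, i.e. at least $D_m/(2m)$, because having extra samples can only help a fixed vertex be selected. Concretely, couple the max-choice process with a process in which $v_3$'s degree increments are stochastically dominated below by $\Bernoulli(D_m/2m)$ increments. Then $D_n$ dominates the degree of a fixed early vertex in standard preferential attachment, which by the martingale argument behind Lemma~\ref{lem:numbers} (applied to $r_m = \Exp[D_m]$ or to a suitable supermartingale $D_m \prod (1 + \tfrac{1}{2m})^{-1}$) grows like $n^{1/2}$, and in particular $\inf_n D_n/n^{3/8} > 0$ almost surely. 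The exponent $3/8 < 1/2$ gives room to absorb fluctuations.

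To make the almost-sure infimum bound rigorous rather than just an in-expectation statement, I would form the process $Z_m = D_m \big/ \prod_{k=3}^{m-1}\big(1 + \tfrac{1}{2k}\big)$, which is a submartingale with respect to $\Filt[m]$ under the lower coupling (since $\Exp[D_{m+1} \mid \Filt[m]] \geq D_m(1 + \tfrac{1}{2m})$), and more usefully work with the associated \emph{nonnegative supermartingale} obtained by a standard exponential/Doob transform, or simply invoke that a submartingale bounded below that is $L^1$-bounded after normalization converges a.s.\ to a limit which one checks is strictly positive with probability one by a Borel--Cantelli / second-moment argument on the early increments (the probability $v_3$ never again gains degree after time $m$ decays fast enough). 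Since $\prod_{k=3}^{m}(1+\tfrac1{2k}) \asymp m^{1/2}$, a positive a.s.\ limit for $D_m/m^{1/2}$ yields $\inf_n D_n / n^{1/2} > 0$ on that event; combined with the deterministic lower bound $D_n \geq 1$ for small $n$, we get $\inf_n M_n/n^{3/8} \geq \inf_n D_n / n^{3/8} > 0$ a.s.

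The main obstacle is the passage from an expectation bound to the almost-sure infimum: one must rule out the degenerate possibility that $v_3$'s degree stalls forever, and control the lower tail of $D_m / m^{1/2}$ uniformly in $m$. I expect this is handled exactly as in~\cite{Galashin}: show that the normalized degree process is a nonnegative supermartingale (after the right transform), hence converges a.s., and then separately show the limit is a.s.\ positive using that $\sum_m \Pr[D_{m+1} = D_m \mid D_m \text{ small}]$ behaves favorably—or, more simply, observe that $D_m / m^{1/2}$ having limit $0$ would force infinitely many "bad" steps whose joint probability is zero. The gap between $3/8$ and $1/2$ is deliberately generous so that only crude tail control is needed, and no sharp constants enter at this stage.
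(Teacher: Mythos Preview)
Your central comparison claim is false. You assert that in the max-choice model, for the fixed vertex $v_3$,
\[
\Pr[\,Y_m = v_3 \mid \Filt[m]\,] \;\geq\; \frac{D_m}{2m},
\]
reasoning that ``having extra samples can only help a fixed vertex be selected.'' That monotonicity holds for a vertex of currently \emph{maximal} degree, but it fails for a generic vertex: if $v_3$ has small degree while a hub of large degree is present, then $v_3$ can be sampled and still lose. Concretely, take $d=2$, suppose $D_m=1$, and suppose there is a vertex $w$ with $\deg w = 2m-k-1$ while the remaining $k$ vertices besides $v_3$ all have degree $1$. A short computation gives
\[
\Pr[\,Y_m=v_3\,] \;=\; \frac{k+1}{4m^2},
\]
which is at most $1/(4m)$ since $k\le m-1$, strictly below $1/(2m)=D_m/(2m)$. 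So the coupling with ordinary preferential attachment does not go through, and there is no reason the degree of a fixed early vertex in the max-choice tree should grow polynomially at all.

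The paper sidesteps this by working with $M_n$ itself rather than a fixed vertex. The point is that the current maximal-degree vertex \emph{does} enjoy the monotonicity you wanted: it wins whenever it is sampled, so
\[
p_n \;=\; 1-\Bigl(1-\tfrac{M_nL_n}{2n}\Bigr)^{d}\;\geq\;1-\Bigl(1-\tfrac{M_n}{2n}\Bigr)^{2}\;=\;\frac{M_n}{n}\cdot\frac{4n-M_n}{4n}\;\geq\;\frac{3M_n}{4n}.
\]
One then computes directly
\[
\E\!\left[\frac{1}{M_{n+1}}\,\Big|\,\Filt[n]\right]
=\frac{1}{M_n}\left(1-\frac{p_n}{M_n+1}\right)
\leq \frac{1}{M_n}\left(1-\frac{3}{8n}\right),
\]
so that with $C_{n+1}=\tfrac{8n}{8n-3}\,C_n$ (hence $C_n\asymp n^{3/8}$ by Lemma~\ref{lem:numbers}) the process $C_n/M_n$ is a nonnegative supermartingale and converges a.s.\ to a finite limit. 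That immediately gives $\inf_n M_n/n^{3/8}>0$ a.s. The exponent $3/8$ arises from the crude step $M_n+1\le 2M_n$; it is sharpened to $3/4-\delta$ by a bootstrap in the next lemma. Your instinct to build a supermartingale and invoke a.s.\ convergence is right; the fix is to apply it to $1/M_n$, not to the reciprocal degree of a fixed vertex.
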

\begin{proof}
Define $C_{n+1}=\frac{8n}{8n-3}C_n=(1+\frac{3}{8n-3})C_n,$ with $C_1 = 1.$  By Lemma~\ref{lem:numbers} we have that $C_nn^{-3/8}$ converges to a positive limit.  Now, we will show that $C_{n}/M_{n}$ is a supermartingale from which the desired conclusion follows.

Let $p_{n}$ be the probability to increase maximum at the $n^{th}$ step.
Note that
\begin{align*}
p_{n} &=1 - \left(1-\frac{M_nL_n}{2n}\right)^{d} 
\geq 1-\left(1-\frac{M_{n}}{2n}\right)^{d}  \\ 
&\geq 1-\left(\frac{2n-M_{n}}{2n}\right)^{2} 
\geq \frac{M_{n}}{n}-\frac{M_{n}^2}{4n^2}   \\
&= \frac{M_n}{n}\frac{4n-M_n}{4n}\geq\frac{3M_{n}}{4n}.
\end{align*}
For $1/M_{n}$ we get
\begin{align*}
\E\left[1/M_{n+1} | \Filt \right]
&=\frac{p_{n}}{M_{n}+1}+\frac{1-p_{n}}{M_{n}} 
=\frac{M_n+1-p_n}{M_n(M_n+1)} \\
&=\frac{1}{M_n}\left(1-\frac{p_n}{M_n+1}\right) 
\leq\frac{1}{M_{n}}\left(1-\frac{p_n}{2M_n}\right) \\
&\leq \frac{1}{M_n}\left(1-\frac{3}{8n}\right).
\end{align*}
\end{proof}

We will now show that with this initial argument, it is possible to improve the result by an application of the same argument.

\begin{lemma}
\label{lem:improved_low_bound2}
For any fixed $\delta>0$,
\[
\liminf_{n\to\infty} M_n/n^{3/4-\delta} = \infty
\]
a.s.
\end{lemma}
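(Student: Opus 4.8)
The plan is to bootstrap the supermartingale argument of Lemma~\ref{lem:starting_low_bound}. There, the exponent $3/8$ appeared (rather than $3/4$) only because the crude bound $\tfrac{1}{M_n+1}\ge\tfrac{1}{2M_n}$ was used, i.e. only $M_n\ge 1$ was exploited. But Lemma~\ref{lem:starting_low_bound} already forces $M_n\to\infty$ almost surely, so in fact $\tfrac{M_n}{M_n+1}=1-o(1)$; rerunning the identical computation while retaining this ratio recovers any exponent strictly below $3/4$, and the loss of $\delta$ is exactly the price of turning ``$M_n$ is large'' into an event adapted to $\Filt[n]$.

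Fix $\delta>0$ and pick an integer $R$ with $\beta:=\tfrac{3R}{4(R+1)}>\tfrac34-\delta$. For each $n_0\in\N$ set $G=G_{R,n_0}:=\{M_{n_0}\ge R\}\in\Filt[n_0]$; since $n\mapsto M_n$ is nondecreasing, $M_n\ge R$ for every $n\ge n_0$ on $G$. Exactly as in Lemma~\ref{lem:starting_low_bound} we have $p_n\ge\tfrac{3M_n}{4n}$ (where $p_n$ is the probability that the maximum degree increases at step $n$, using $M_n\le n$), so on $G$ and for $n\ge n_0$,
\[
\E\bigl[\,1/M_{n+1}\mid\Filt[n]\,\bigr]
=\frac{1}{M_n}\Bigl(1-\frac{p_n}{M_n+1}\Bigr)
\le\frac{1}{M_n}\Bigl(1-\frac{3}{4n}\cdot\frac{M_n}{M_n+1}\Bigr)
\le\frac{1}{M_n}\Bigl(1-\frac{\beta}{n}\Bigr).
\]
Put $a_n:=\prod_{k=n_0}^{n-1}\bigl(1-\tfrac{\beta}{k}\bigr)>0$; by Lemma~\ref{lem:numbers} applied to $1/a_n$ we get $a_n\sim c\,n^{-\beta}$ for some $c>0$. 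Then $Z_n:=a_n^{-1}M_n^{-1}\one_G$ is a nonnegative supermartingale for $n\ge n_0$---the relation $a_{n+1}=a_n(1-\beta/n)$ cancels the factor in the last display---hence $Z_n$ converges almost surely; on $G$ this forces $\liminf_n a_nM_n>0$, that is, $\liminf_n M_n/n^{\beta}>0$.

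Since $M_n\to\infty$ almost surely, $\bigcup_{n_0}G_{R,n_0}$ has probability $1$, so $\liminf_n M_n/n^{\beta}>0$ almost surely; as $\beta>\tfrac34-\delta$ this gives $M_n/n^{3/4-\delta}=(M_n/n^{\beta})\,n^{\,\beta-3/4+\delta}\to\infty$, and $\delta>0$ was arbitrary. The estimates used ($p_n\ge\tfrac{3M_n}{4n}$, and $a_n\asymp n^{-\beta}$) are routine and already present in Lemma~\ref{lem:starting_low_bound} and Lemma~\ref{lem:numbers}; the only genuine point---and the reason for the $-\delta$---is the localization, namely that ``$M_n\ge R$ eventually'' is not $\Filt[n]$-measurable for fixed $n$, which we sidestep by working on the adapted event $\{M_{n_0}\ge R\}$ and invoking monotonicity of $M_n$, at the cost of the harmless factor $R/(R+1)$ in the exponent. (Localizing instead along a stopping time on the sharper bound $M_n\ge cn^{3/8}$ of Lemma~\ref{lem:starting_low_bound} would yield the exponent $3/4$ itself, but $3/4-\delta$ is all that is needed in the sequel.)
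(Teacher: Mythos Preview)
Your proof is correct and follows the same supermartingale bootstrap as the paper: both start from the identity
\[
\E\bigl[M_{n+1}^{-1}\mid\Filt[n]\bigr]=\frac{1}{M_n}\Bigl(1-\frac{p_n}{M_n+1}\Bigr)\le\frac{1}{M_n}\Bigl(1-\frac{3}{4n}\cdot\frac{M_n}{M_n+1}\Bigr),
\]
and then localize so that the ratio $M_n/(M_n+1)$ can be replaced by a constant close to $1$. The only difference is in how this localization is carried out. The paper introduces the stopping time $\tau_\epsilon=\inf\{n:M_n<\epsilon n^{3/8}\}$ coming from the quantitative conclusion of Lemma~\ref{lem:starting_low_bound}, and applies Doob's theorem to the stopped supermartingale; you instead exploit only the qualitative consequence $M_n\to\infty$ together with the monotonicity of $n\mapsto M_n$, working on the $\Filt[n_0]$-measurable event $\{M_{n_0}\ge R\}$ and multiplying by its indicator. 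Your route is slightly more elementary---it avoids stopping times and uses strictly less of Lemma~\ref{lem:starting_low_bound}---at the cost of the harmless discrete parameter $R$ in place of the continuous $\epsilon$. As you yourself note in the closing parenthetical, the two localizations are interchangeable here and lead to the same conclusion.
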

\begin{proof}
Let $\tau_{\epsilon}$ be the stopping time given by
\[
\tau_{\epsilon} = \inf\{
n~:~M_n < \epsilon n^{3/8}
\}.
\]
From Lemma~\ref{lem:starting_low_bound}, we have that
\(
\Pr \left[
\tau_{\epsilon} < \infty
\right] \to 0
\)
as $\epsilon \to 0.$
Set $O_{\epsilon}$ to be the event $\{\tau_{\epsilon} = \infty\}.$

As in the proof of Lemma~\ref{lem:starting_low_bound}, we get that
\(
p_{n}\geq \frac{3M_{n}}{4n}.
\)
Then for $1/M_{n+1},$ it holds that
\[
\E(1/M_{n+1}|\filt_{n})
=\frac{1}{M_n}\left(1-\frac{p_n}{M_n+1}\right)
\leq\frac{1}{M_n}\left(1-\frac{3}{4n}\frac{M_n}{M_n+1}\right).
\]
For each fixed $\delta>0$ and $n<\tau_\epsilon,$
\[
\frac{M_{n}}{M_{n}+1}
=1-\frac{1}{M_{n}+1}
\geq 1-\frac{1}{1+\epsilon n^{3/8}}
\geq 1-\frac{4\delta}{6}
\]
if $n>n_{0}$ for some sufficiently large $n_{0}=n_{0}(\delta,\epsilon)$.
Hence for $\tau_{\epsilon} > n>n_{0}$ we get
\[
\E(1/M_{n+1}|\filt_{n})
\leq \frac{1}{M_n}\left(1-\frac{3/4-\delta/2}{n}\right).
\]
Define $R_{n+1}=\frac{4n}{4n-3+2\delta}R_n\geq(1+\frac{3/4-\delta/2}{n})R_n,$ $n\geq n_{0}$. Then $R_{n}/M_{n}$ is a supermartingale and from Lemma~\ref{lem:numbers} it follows that $R_{n}n^{-(3/4-\delta/2)}$ converges to a positive finite limit. Setting $A_n = R_n/M_n,$ we have that by Doob's theorem $A_{n\wedge\tau_{\epsilon}}$ tends to a finite limit with probability 1.  Hence, conditioned on $O_{\epsilon},$ we have that $M_n/n^{3/4-\delta} \to \infty~\as$  Thus, it follows that
\[
\Pr\left[
\liminf_{n\to\infty} M_n/n^{3/4-\delta} = \infty
\right]
\geq \Pr\left[
\{\liminf_{n\to\infty} M_n/n^{3/4-\delta} = \infty\}
\cap O_\epsilon
\right]
=\Pr\left[ O_\epsilon \right].
\]
Taking $\epsilon \to 0,$ we conclude the proof.
\end{proof}

\section{Persistent hub}
\label{sec:hub}

Our method of proof is essentially by comparison with the preferential attachment model, and we use the machinery of~\cite{Galashin} developed for this task.  First we estimate the probability that the degree of the vertex added on the $(k+1)^{st}$ step could exceed the degree of vertex with highest degree at step $k$. For this we use the following lemma:
\begin{lemma}
\label{lem:change_of_leader}
The probability $\pi(k)$ that the degree of the vertex added on the $k$-th step becomes maximal does not exceed
\[
\pi(k)\leq\frac{P(M_{k})}{2^{M_{k}}},
\]
where $P(A)$ is some polynomial of $A$ and $M_{k}$ is the maximum degree at the $k$-th step.  Hence, the number of vertices that at some point in the process have maximal degree is finite almost surely.
\end{lemma}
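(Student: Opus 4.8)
The plan is to bound $\pi(k)$ by first conditioning on the state of the tree at time $k$, and then showing that for the new vertex $v_{k+1}$ to ever reach degree $M_k$, it must win a long sequence of "choice competitions", each of which has probability decreasing geometrically in the current degree of $v_{k+1}$. The key observation is that when $v_{k+1}$ has degree $j < M_k$, the probability that it is selected as $Y_m$ at a given later step $m > k$ is at most the probability that at least one of the $d$ samples lands on $v_{k+1}$, which is at most $d \cdot \frac{j}{2m} \leq \frac{dj}{2k}$ — but more usefully, to \emph{increase} its degree past $j$, $v_{k+1}$ must be chosen over a vertex of degree $\geq M_k > j$, and since ties/losses go to the higher-degree vertex, $v_{k+1}$ is chosen only if none of the samples hits a strictly-higher-degree vertex \emph{and} at least one hits $v_{k+1}$. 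Either way, the relevant per-step probability, summed over all future times, behaves like a convergent-type quantity once $j$ is moderately large, so the number of steps needed for $v_{k+1}$ to climb from degree $1$ to degree $M_k$ is overwhelmingly unlikely to ever occur.

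\medskip

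More concretely, I would set up a comparison with a pure preferential attachment / Pólya-urn computation. Let $D_m$ denote the degree of $v_{k+1}$ at time $m \geq k+1$ (so $D_{k+1} = 1$). Conditionally on the past, $D_{m+1} - D_m \in \{0,1\}$, and $\Pr[D_{m+1} - D_m = 1 \mid \Filt_m] \leq 1 - (1 - \tfrac{D_m}{2m})^d \leq \tfrac{dD_m}{2m}$, exactly the preferential-attachment growth bound. Hence $D_m$ is stochastically dominated by the degree of a fixed vertex in the ordinary preferential attachment tree, whose degree at time $m$ grows like $m^{1/2}$ with a light-tailed fluctuation (the standard martingale $D_m \prod_{i}(1 + \tfrac{d/2}{i})^{-1}$-type argument, or a crude union bound: $\Pr[D_m \geq \lambda \sqrt{m/k}\,]$ decays polynomially in $\lambda$). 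Meanwhile $M_m$ is bounded below, by Lemma~\ref{lem:improved_low_bound2}, by $m^{3/4 - \delta}$ eventually. So for $v_{k+1}$ to ever become the leader we would need $D_m \geq M_m \gtrsim m^{3/4-\delta}$ while typically $D_m \lesssim \sqrt{m/k}\cdot(\log)$; for $m$ large this forces an event whose probability is summably small, and for the bounded range of "small" $m$ one extracts the factor $2^{-M_k}$ (up to a polynomial in $M_k$) because climbing to height $M_k$ against an opponent of height $\geq M_k$ requires winning $\approx M_k$ independent biased contests each with win-probability at most $\tfrac12$ (in the worst, $d=2$, tie-broken case the contest probability against a unique leader of degree $M_k$ when $v_{k+1}$ has the same degree is $\tfrac12$, and strictly less when behind).

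\medskip

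The cleanest route is: (i) show $\pi(k) \le \Pr[\exists m : D_m = M_k \text{ and } D_m \text{ overtakes}]$ and split this as $\Pr[\sup_m D_m \ge M_k]$; (ii) bound $\Pr[\sup_{m \ge k+1} D_m \ge A]$ by the number of upcrossings of the level, using that from degree $j$ the chance of ever reaching $j+1$ before the opponent grows is at most some $\rho < 1$ uniformly (here $\rho$ comes from the worst-case single-contest odds), giving a geometric bound $\rho^{A - 1}$, and with $\rho = \tfrac12$ in the $d=2$ tie case this is $2^{-(A-1)}$; (iii) take $A = M_k$, multiply by the polynomial correction from the non-worst-case steps, yielding $\pi(k) \le P(M_k) 2^{-M_k}$. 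Finally, since $M_k \to \infty$ a.s.\ (indeed $M_k \ge k^{3/4-\delta}$ eventually by Lemma~\ref{lem:improved_low_bound2}), we have $\sum_k \pi(k) < \infty$ a.s.\ along the random sequence $M_k$ — more carefully, one first conditions on the event $O_\epsilon = \{M_n \ge \epsilon n^{3/8} \ \forall n\}$ from Lemma~\ref{lem:starting_low_bound}, on which $\sum_k P(\epsilon k^{3/8}) 2^{-\epsilon k^{3/8}} < \infty$, applies Borel--Cantelli to conclude only finitely many vertices ever lead, and then lets $\epsilon \to 0$.

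\medskip

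\textbf{The main obstacle} I anticipate is step (ii): making rigorous the claim that the odds of $v_{k+1}$ climbing one more level are bounded by a constant $\rho < 1$ \emph{uniformly over the configuration and over time}, since the per-step competition probability genuinely depends on $m$, on $L_m$, and on the degree of the current leader — so one cannot literally reduce to a single fixed-$\rho$ geometric random variable. The honest fix is to compare with the Pólya urn / preferential attachment embedding as in~\cite{Galashin}: the pair (degree of $v_{k+1}$, degree of $v_K$) evolves, ignoring the rest of the tree, like a two-color urn in which the leader always has at least as many balls, and a standard urn computation shows the probability that the lagging color ever catches up from a deficit of $\Delta$ decays like $c^{\Delta}$ for some $c<1$; one then absorbs everything else into the polynomial factor $P$. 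Getting the polynomial dependence on $M_k$ exactly right (rather than just sub-exponential) is a bookkeeping matter that I would handle by the same explicit product estimates used in Lemmas~\ref{lem:starting_low_bound} and~\ref{lem:improved_low_bound2}.
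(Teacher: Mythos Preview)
Your final paragraph lands on exactly the paper's approach: reduce the overtaking question for the pair $(\deg v_{k+1}, \deg v_i)$ (where $v_i$ is a current leader) to the diagonal-hitting problem for the P\'olya urn walk $T_n=(A_n,B_n)$, then quote the $P(A)/2^A$ estimate from~\cite{Galashin} and finish with Borel--Cantelli via the $M_n\ge Mn^{3/8}$ lower bound. So the plan is viable and matches the paper.

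Two points are worth flagging. First, your earlier attempts are dead ends rather than alternatives. Bounding $D_m$ alone by preferential-attachment growth $\sim\sqrt{m}$ cannot yield $\pi(k)\le P(M_k)2^{-M_k}$, since $M_k$ is just a constant conditionally and $\sqrt{m}$ eventually exceeds it; you must race $D_m$ against a specific leading vertex, not against the frozen number $M_k$. Likewise the ``uniform $\rho<1$ per level'' heuristic has no direct justification, as you yourself note.

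Second, the one genuinely nontrivial step you gloss over is the coupling that makes the urn comparison valid. The paper isolates this as a separate lemma: if $w_n>u_n$ are the two degrees, one must show that conditional on one of them increasing, the leader increases with probability at least $w_n/(w_n+u_n)$. This reduces to the inequality $p_n^w/p_n^u \ge w_n/u_n$, where $p_n^w,p_n^u$ are the respective one-step increment probabilities in the max-choice model. The paper proves this by writing each probability as a difference of $d$-th powers of partial degree sums and invoking convexity of $x\mapsto x^d$ (equivalently, the bound $(x+y)^d\ge x^d+dx^{d-1}y$). Your phrase ``the leader always has at least as many balls'' is the right intuition but not a proof; this inequality is the technical heart and deserves to be stated and checked.
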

First we prove the following auxiliary result:
\begin{lemma}
\label{lem:walk_domination}
Fix $m_0 > 0.$  Let $T_n = (A_n, B_n)$ for $n \geq m_0$ denote the random walk on $\Z^2$ started from $(A_{m_0},B_{m_0})$ that moves one step right or one step up with probabilities proportional to $A_n$ and $B_n$ respectively.
For any pair of vertices $v_{i}$ and $v_{j}$, the probability that their degrees become equal at some time $n\geq m_{0}$ is bounded above by the probability that the random walk $T_{n}=(A_{n},B_{n})$ reaches the line $y=x$,
where $(A_{m_0},B_{m_0}) = (\deg(v_i),\deg(v_j))$ at time $m_0.$ 
\end{lemma}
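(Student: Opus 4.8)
The plan is to dominate the max-choice pair dynamics by the P\'olya urn through a supermartingale built from the urn's hitting probability. For integers $a,b\ge 1$ set
\[
\phi(a,b)=\Pr\bigl[\,(A_n,B_n)\text{ started from }(a,b)\text{ ever satisfies }A_n=B_n\,\bigr].
\]
Three elementary facts about $\phi$ are all that is needed: (i) $\phi$ is symmetric and $\phi(a,a)=1$; (ii) by a first-step decomposition, for $a\neq b$,
\[
\phi(a,b)=\tfrac{a}{a+b}\,\phi(a+1,b)+\tfrac{b}{a+b}\,\phi(a,b+1);
\]
(iii) for $a>b$ one has $\phi(a+1,b)\le\phi(a,b)\le\phi(a,b+1)$, which follows from the de~Finetti representation $\phi(a,b)=\E\bigl[\min\bigl(1,((1-p)/p)^{a-b}\bigr)\bigr]$ with $p\sim\Beta(a,b)$, since both decreasing $a-b$ and stochastically decreasing $p$ raise that expectation. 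Let $\tau=\inf\{n\ge m_0:\deg_n v_i=\deg_n v_j\}$; the event that the degrees ever coincide is $\{\tau<\infty\}$, so it suffices to prove $\Pr[\tau<\infty]\le\phi(\deg_{m_0}v_i,\deg_{m_0}v_j)$, and the right-hand side is precisely $\Pr[T_n\text{ reaches }y=x]$ by the choice $(A_{m_0},B_{m_0})=(\deg v_i,\deg v_j)$.

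The heart of the matter is the per-step comparison, valid in any fixed tree $P_m$: if $\deg v\ge\deg w$ then $\Pr[Y=v]/\deg v\ \ge\ \Pr[Y=w]/\deg w$, where $Y$ is the vertex receiving the next edge. To prove it, realize the (immaterial) tie-breaking rule by an independent uniformly random total order $\prec$ refining the order by degree, so that $Y$ is the $\prec$-largest of the $d$ samples. Fix a vertex $v$ and put $\rho=\deg v/(2m)$, $h=\Pr[\text{a sample has degree}>\deg v]$, $M=\Pr[\text{a sample has degree}\ge\deg v]$, and let $N$ be the number of vertices of degree $\deg v$. Conditioning on the number $J$ of degree-$(\deg v)$ vertices that precede $v$ in $\prec$, which is uniform on $\{0,\dots,N-1\}$ and whose vertices carry total sampling probability exactly $J\rho$ (they all have degree $\deg v$), gives $\Pr[Y=v\mid J]=(1-h-J\rho)^d-(1-h-(J+1)\rho)^d$, and averaging telescopes:
\[
\Pr[Y=v]=\frac1N\sum_{j=0}^{N-1}\Bigl[(1-h-j\rho)^d-(1-h-(j+1)\rho)^d\Bigr]=\frac{(1-h)^d-(1-M)^d}{N},
\]
using $N\rho=M-h$. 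Since $N\deg v=2m\,(M-h)$, this rearranges to $\Pr[Y=v]/\deg v=\tfrac1{2m}\sum_{k=0}^{d-1}(1-h)^k(1-M)^{\,d-1-k}$. Both $h$ and $M$ are non-increasing in $\deg v$, so $1-h$ and $1-M$ are non-decreasing, hence so is this sum; this is the claimed inequality.

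It remains to assemble the pieces. Set $\phi_n=\phi(\deg_n v_i,\deg_n v_j)$. For $n<\tau$, using symmetry of $\phi$ write $a=\deg_n v_i>b=\deg_n v_j$ (say), and $\delta=\phi(a,b)-\phi(a+1,b)\ge0$; fact (ii) gives $\phi(a,b+1)-\phi(a,b)=\tfrac ab\delta$, whence
\[
\E[\phi_{n+1}-\phi_n\mid\filt_n]=-\Pr[Y=v_i]\,\delta+\Pr[Y=v_j]\,\tfrac ab\,\delta=\tfrac\delta b\bigl(a\,\Pr[Y=v_j]-b\,\Pr[Y=v_i]\bigr)\le0
\]
by the per-step inequality. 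Thus $\phi_{n\wedge\tau}$ is a bounded supermartingale, and since $\phi_{n\wedge\tau}\ge\one[\tau\le n]$ (because $\phi_\tau=1$) we obtain $\Pr[\tau\le n]\le\E[\phi_{n\wedge\tau}]\le\phi_{m_0}$; letting $n\to\infty$ concludes. I expect the only genuinely new step to be the per-step comparison, and within it the crucial point is the telescoping that collapses the average over $J$ into the closed form above; the reduction, the supermartingale computation, and the optional-stopping step are all routine.
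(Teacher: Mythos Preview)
Your proof is correct, but it follows a genuinely different route from the paper's. Both arguments rest on the same per-step comparison, namely that whenever $\deg v_i>\deg v_j$ one has $\Pr[Y=v_i]/\deg v_i\ge \Pr[Y=v_j]/\deg v_j$; the paper obtains this by bounding $\Pr[Y=v_i]$ from below (the event that $v_i$ is sampled and all other samples have strictly smaller degree) and $\Pr[Y=v_j]$ from above, then invoking the convexity inequality $(x+y)^d\ge x^d+dx^{d-1}y$, whereas you compute an exact closed form $\Pr[Y=v]/\deg v=\tfrac{1}{2m}\sum_{k=0}^{d-1}(1-h)^k(1-M)^{d-1-k}$ via the telescoping average over the random tie-break order (your observation that any symmetric tie-breaking rule yields the same $\Pr[Y=v]$ is correct and makes this legitimate). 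The more substantial divergence is in how the per-step inequality is converted into the global statement: the paper builds an explicit monotone coupling between the jump chain of $(\deg v_i,\deg v_j)$ and the P\'olya urn $T_n$, maintaining $w_{\tau_n}\ge A_n$ and $u_{\tau_n}\le B_n$, so that if the degrees meet then so must the urn; you instead take the urn's hitting probability $\phi$ as a potential function, show via the harmonic relation (ii) and the monotonicity (iii) that $\phi(\deg_n v_i,\deg_n v_j)$ is a bounded supermartingale up to $\tau$, and conclude by optional stopping. The coupling is more elementary in that it needs nothing about $\phi$ beyond the transition rule of $T_n$; your approach trades the coupling bookkeeping for the de~Finetti representation of $\phi$ and its monotonicity, and as a bonus produces the explicit formula for $\Pr[Y=v]$.
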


\begin{proof}
Consider the two-dimensional random walk $S_{n}=(w_{n},u_{n}),$ where $w_{n}$ is the degree of vertex $v_{i}$ and $u_{n}$ is the degree of vertex $v_{j}$. Without loss of generality assume that $w_{m_{0}}>u_{m_{0}}$. We want to show that
\[
  \Pr[ \exists n \geq m_0 : w_n = u_n ] 
  \leq
  \Pr[ \exists n \geq m_0 : A_n = B_n ]. 
\]
We will show the existence of an appropriate coupling of $S_n$ and $T_n.$
To this end, set
\begin{align*}
  F_{n}&=\sum_{v_{k}\in V}\deg v_{k}\one[\deg v_{k} < \deg v_{i}] \text{ and } \\ 
  G_{n}&=\sum_{v_{k}\in V}\deg v_{k}\one[\deg v_{k} \leq \deg v_{j}],
\end{align*}
and let $p_n^w = \Pr[ w_{n+1} = w_n +1]$ and $p_n^u = \Pr[u_{n+1} = u_n + 1].$

The probability that $w_n = \deg v_i$ increases is at least the probability that $v_{i} \in \{X^1_m$,...,$X^d_m\}$ and that all the other choices have degree strictly less than $\deg v_i.$ Thus
\[
  p_n^w \geq \left(\frac{F_n+w_n}{2n}\right)^d - \left( \frac{F_n}{2n}\right)^d.
\]
Likewise, the probability that $u_n = \deg v_j$ increases is at most the probability that vertex $v_{j} \in \{X^1_m$,...,$X^d_m\}$ and $\deg v_j = \max_{1\leq k \leq d} \deg X^k_j.$  Thus
\[
  p_n^u \leq \left(\frac{G_n}{2n}\right)^d - \left( \frac{G_n - u_n}{2n}\right)^d.
\]
So long as $w_n = \deg v_i > \deg v_j = u_n,$ we have $F_n \geq G_n.$  Hence
\begin{align*}
  \frac{p_n^w}{p_n^u} 
  &\geq \frac{ (F_n+w_n)^d - (F_n)^d}{(G_n)^d - (G_n - u_n)^d}  \\
  &\geq \frac{ (G_n+w_n)^d - (G_n)^d}{(G_n)^d - (G_n - u_n)^d}  \\
\intertext{ Using the convexity of $x^d$, we have the bound $|x+y|^d \geq x^d + d x^{d-1}y$ for $x \geq 0.$  Applying this to the previous inequality, we get: }
  \frac{p_n^w}{p_n^u} 
  &\geq \frac{ d (G_n)^{d-1} w_n}{d(G_n)^{d-1}u_n}
  =\frac{ w_n }{ u_n }.
\end{align*}
Thus, 
\[
  \frac{p_n^w}{p_n^w + p_n^u}
  =\frac{1}{1 + \tfrac{p_n^u}{p_n^w}}
  \geq\frac{1}{1 + \tfrac{u_n}{w_n}}
  =\frac{w_n}{w_n + u_n}.
\]
Letting $\tau_1,\tau_2,\tau_3,\ldots$ be the times at which $S_n$ moves, we have that $S_{\tau_n}$ and $T_n$ can be coupled in such a way that both $w_{\tau_n} \geq A_n$ and $u_{\tau_n} \leq B_n$ until the first time $w_{\tau_n} = u_{\tau_n}.$  Thus if at some finite time $w_n = u_n,$ it must also be that there is a time $m \leq n$ at which $A_m = B_m,$ completing the proof.

\end{proof}
The walk $T_{n}$ would describe the evolution of the degrees of two vertices in the preferential attachment model without choices. Hence we can apply to it some of the results from~\cite{Galashin}. We will now use it to prove Lemma \ref{lem:change_of_leader}.
\begin{proof}
Consider the vertex $v_{k+1}$ added on the $k$-th step. Its degree at time $k+1$ equals to 1. Let $A_{k+1}=M_{k+1}$, $B_{k+1}=1$, \text{and} $m_{0}=k+1$.
Corollary 15 of~\cite{Galashin} gives the following estimate for the probability $q(M_{k+1})$ that the walk $T_{n}$, $n>k+1$ moves from the point $(M_{k+1},1)$ to the diagonal:
$$q(M_{k+1})\leq\frac{P(M_{k+1})}{2^{M_{k+1}}},$$
where $P(M_{k+1})$ is some polynomial.

By Lemma~\ref{lem:starting_low_bound} we get that
\(
M_n\geq M n^{3/8}
\)
for some random $M>0$ almost surely.  In particular, $\pi(k)$ forms a convergent series with probability $1$, and by Borel-Cantelli, the number of $k$ for which the vertex added at the $k$-th step have maximal degree at some point in time is finite almost surely.
\end{proof}

 To complete the proof of \ref{prop:persistent_hub} we now need the following lemma:
\begin{lemma}
\label{lem:change_of_leadership}
Consider two vertices that at some time have maximal degree.  With probability $1$ there are only a finite number of times when these vertices have the same degree and are maximal.
\end{lemma}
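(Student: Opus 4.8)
The plan is to reduce the statement, following the scheme of~\cite{Galashin}, to a fact about a P\'olya urn. Fix two vertices $v_i$ and $v_j$ that are maximal at some time (by Lemma~\ref{lem:change_of_leader} there are only finitely many such, so it suffices to treat one pair). Let $\tau_1<\tau_2<\cdots$ be the steps at which one of $v_i,v_j$ receives the new edge, write $w_k=\deg v_i$ and $u_k=\deg v_j$ just after step $\tau_k$, and put $R_k=w_k/(w_k+u_k)$. Two quick reductions come first. If $\deg v_i$ and $\deg v_j$ do not both tend to infinity then, being monotone and integer-valued, their degrees are eventually distinct or eventually both equal to a fixed constant $c$; in the first case there is nothing to prove, and in the second, since $M_n\to\infty$ by Lemma~\ref{lem:starting_low_bound}, the value $c$ is eventually strictly below $M_n$, so $v_i,v_j$ are maximal only finitely often. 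Thus we may assume $\deg v_i,\deg v_j\to\infty$; moreover whenever they are tied and maximal their common degree is $M_n\geq M n^{3/8}$, so in the regime of interest the degrees are large.

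Next I would show $R_k\to R_\infty$ a.s.\ for some random $R_\infty\in[0,1]$. The input is the estimate already established inside the proof of Lemma~\ref{lem:walk_domination}: when $w_k>u_k$ the move probabilities satisfy $p^w_{\tau_k}/p^u_{\tau_k}\geq w_k/u_k$, which gives $\Exp[R_{k+1}\mid\Filt[\tau_k]]\geq R_k$, and symmetrically $\Exp[R_{k+1}\mid\Filt[\tau_k]]\leq R_k$ when $w_k<u_k$. Hence both $(R_k-\tfrac12)^+$ and $(\tfrac12-R_k)^+$ are bounded submartingales; each converges a.s., and therefore so does $R_k$. Once we also know $R_\infty\neq\tfrac12$ the lemma follows quickly: then $|w_k-u_k|=|2R_k-1|(w_k+u_k)$ stays bounded away from $0$ and in fact tends to infinity, so $\deg v_i\neq\deg v_j$ for all large $n$, and a fortiori they are tied and maximal only finitely often.

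The substance, then, is to prove $R_\infty\neq\tfrac12$ almost surely, and this is where the P\'olya urn comparison is used. By Lemma~\ref{lem:walk_domination}, starting from any step at which $w>u$ the probability that the two degrees ever coincide again is at most the probability that the P\'olya urn $T_n$ started from $(w,u)$ reaches the diagonal; an explicit computation gives this probability as exactly $2u/(w+u)$, and more generally in $T_n$ the probability that $B$ increases $g$ times before $A$ increases once is at most $2^{-g}$. On $\{R_\infty\neq\tfrac12\}$, at the moment the leading one of $v_i,v_j$ first reaches level $K$, the other is behind by a gap equal to $|2R_k-1|(w_k+u_k)\geq |2R_k-1|\,K$, which grows linearly in $K$; feeding the $2^{-g}$ bound into Borel--Cantelli shows that on this event only finitely many levels $K$ can ever host a configuration in which $v_i$ and $v_j$ are tied and maximal. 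What is left is to show $\Pr[R_\infty=\tfrac12]=0$. Here I would compare $R_k$ with the genuine P\'olya-urn martingale, whose limit has an absolutely continuous law and so avoids $\tfrac12$ almost surely; the submartingale inequality above says $R_k$ is at least as spread out as this urn, so its limit should inherit the property.

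The main obstacle is precisely this last transfer. The coupling furnished by Lemma~\ref{lem:walk_domination} is valid only up to the first time the two degree processes meet again, so one cannot apply it once and for all; instead it must be restarted after every coincidence, and one must show that the cumulative probability of infinitely many further coincidences vanishes. Controlling this requires quantifying how quickly the gap $|w_k-u_k|$ is pushed away from $0$ at these restart times --- which is where the a priori growth $M_n\geq M n^{3/8}$ from Lemma~\ref{lem:starting_low_bound}, and its improvement in Lemma~\ref{lem:improved_low_bound2}, do the work.
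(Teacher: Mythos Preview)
Your initial reductions match the paper's, and your argument that $R_k=w_k/(w_k+u_k)$ converges almost surely is correct. The genuine gap is exactly where you flag it: you never establish $\Pr[R_\infty=\tfrac12]=0$. The heuristic that $R_k$ is ``at least as spread out'' as the P\'olya-urn martingale is not a proof, and the restart-plus-a-priori-growth strategy you sketch is not carried out. In particular, Lemmas~\ref{lem:starting_low_bound} and~\ref{lem:improved_low_bound2} control $M_n$, not the gap $|w_k-u_k|$ at successive coincidence times, so it is unclear how they would feed a Borel--Cantelli argument. (Also, once $R_\infty\neq\tfrac12$ is known, the conclusion is immediate --- $|w_k-u_k|=|2R_k-1|(w_k+u_k)\to\infty$ --- so your digression through levels $K$ and the $2^{-g}$ bound is unnecessary.)

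The paper sidesteps the whole issue by working with the P\'olya urn $T$ rather than with $R_k$. The point you are missing is that the coupling behind Lemma~\ref{lem:walk_domination} does \emph{not} need to be restarted: starting $S$ and $T$ together on the diagonal, one can couple them so that $|w_n-u_n|\geq|A_n-B_n|$ for all $n$. Indeed, the inequality $p^w/p^u\geq w/u$ when $w>u$ (and symmetrically when $w<u$) says precisely that the gap $|w-u|$ is at least as likely to increase as $|A-B|$ whenever the two gaps agree, which is all that is needed to propagate the domination through each step. Hence every diagonal visit of $S$ forces a diagonal visit of $T$, and the number of returns of $S$ is bounded above by that of $T$. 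For $T$ the limit of $A_n/(A_n+B_n)$ is $\Beta(t,t)$, absolutely continuous, so it avoids $\tfrac12$ almost surely and $T$ visits the diagonal only finitely often. No restart is required, and the a priori growth lemmas play no role in this step.
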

\begin{proof}
  Let $v_i$ and $v_j$ be two vertices that at some point have equal, maximal degree, and let $m_0$ be the first time that this occurs.
  Consider a two-dimensional random walk $S$ with coordinates equal to $(\deg v_i, \deg v_j)$ for all time $n \geq m_0.$ They have the same degree if and only if the walk is on the line $y=x$.  As in Lemma~\ref{lem:walk_domination}, the probability that $S$ hits the line $y=x$ when started off the line is bounded from above by the probability that $T$ hits the line $y=x.$  Hence the number of times $n \geq m_0$ that $S$ returns to the line $y=x$ is bounded above by the number of times $T$ returns to the line $y=x.$


It is a standard fact about the P\'olya urn that if $T_n=(A_n,B_n)$ starts from a point $(t,t)$, then the fraction $A_{n}/(A_{n}+B_{n})$ tends in law to a random variable $H(t)$ as $n$ tends to infinity, where $H(t)$ has a beta probability distribution:
$$H(t)\sim\Beta(t,t).$$
(See also Proposition 16 of~\cite{Galashin})
Since the beta distribution is absolutely continuous, the fraction $A_{n}/(A_{n}+B_{n})$ tends to an absolutely continuous probability distribution for any starting point of the process $T$.  Thus the limit of $A_n/(A_n + B_n)$ exists almost surely, and it takes value $1/2$ with probability 0.  Hence this fraction can be equal to $1/2$ only finitely many times, and so $T$ can return to the line $y=x$ only finitely many times.

Thus, the only way that there can be infinitely many times for which $\deg v_i = \deg v_j$ is if both $\deg v_i$ and $\deg v_j$ stabilize, i.e. there is a $D$ not depending on $n$ and an $n_0$ for which $\deg v_i = \deg v_j = D$ for all $n \geq n_0.$  However, in this case, these degrees are only maximal for finitely many times as the maximal degree goes to infinity by Lemma~\ref{lem:starting_low_bound}, which completes the proof.
\end{proof}

\begin{proof}[Proof of Proposition \ref{prop:persistent_hub} ]
  From Lemma~\ref{lem:change_of_leader} the number of vertices that at some point have maximal degree is finite almost surely, and from Lemma~\ref{lem:change_of_leadership} these finitely many vertices only change leadership finitely many times almost surely.  Thus, after some sufficiently long time, a single vertex remains the maximal degree vertex for all subsequent time.
\end{proof}

\section{ The case d=2 }
\label{sec:final2}
In this section, we show the limiting behavior of the maximum degree in the case $d=2.$ 
From Proposition~\ref{prop:persistent_hub} it follows that
\[
\lim_{C\rightarrow\infty}\Pr[L_{n}=1,\;\forall n\geq C]=1.
\]
Introduce events $D(C)=\{L_{n}=1, \;\forall n\geq C\}$, and the stopping times
\(
\eta_C=\inf_{n \geq C}\{n:L_{n} > 1\}.
\)
For fixed $c > 0$ we define the following set of scale functions of $M_n.$
\begin{equation}
\label{eq:mgles}
\begin{aligned}
\mgb[n] &= \exp( cn / M_n ) / n \\
\mgc[n] &= n\exp( -cn / M_n ).
\end{aligned}
\end{equation}

\begin{lemma}
\label{lem:mgs}
In the following, let $\epsilon > 0$ and $C>0$ be a fixed positive number.
\begin{enumerate}
\item
For each $c<4,$ there is a constant $n_1 = n_1(C,c,\epsilon) \geq C$ sufficiently large so that if
\(
\tau_{\epsilon} = \inf_{n > n_1} \{ n ~:~ M_n < \epsilon n^{0.67} \}
\)
then
\(
\mgb[n \wedge \tau_\epsilon \wedge \eta_C]~n \geq n_1
\)
is a supermartingale.
\item
For each $c>4,$ there is a constant $n_2 = n_2(C,c,\epsilon) \geq C$ sufficiently large so that if
\(
\tau_{\epsilon} = \inf_{n > n_2} \{ n ~:~ M_n < \epsilon n^{0.67} \}
\)
then
\(
\mgc[n \wedge \tau_\epsilon \wedge \eta_C]~n \geq n_0
\)
is a supermartingale.
\end{enumerate}
\end{lemma}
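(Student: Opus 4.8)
The plan is to handle both parts by a single computation, exploiting that $\mgc[n] = 1/\mgb[n]$, so that the multiplicative increments $\mgb[n+1]/\mgb[n]$ and $\mgc[n+1]/\mgc[n]$ are reciprocal. Throughout we work on the event $\{n < \eta_C\}$ with $n \ge C$, so that $L_n = 1$ and \eqref{eq:master} makes $M_{n+1} - M_n$ a $\Bernoulli(p_n)$ variable with $p_n = M_n/n - M_n^2/(4n^2)$ (since $d=2$), and on $\{n < \tau_\epsilon\}$, so that $M_n \ge \epsilon n^{0.67}$. From $\log\mgb[n] = cn/M_n - \log n$ one computes
\[
\log\mgb[n+1] - \log\mgb[n] =
\begin{cases}
\delta_0 := \dfrac{c}{M_n} - \log\!\bigl(1+\tfrac1n\bigr), & M_{n+1}=M_n,\\[1.5ex]
\delta_1 := \dfrac{c(M_n-n)}{M_n(M_n+1)} - \log\!\bigl(1+\tfrac1n\bigr), & M_{n+1}=M_n+1,
\end{cases}
\]
and, crucially, $\delta_1 - \delta_0 = -\beta$ with $\beta := c(n+1)/\bigl(M_n(M_n+1)\bigr) > 0$. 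Hence
\[
\E\bigl[\mgb[n+1]/\mgb[n] \mid \Filt[n]\bigr] = e^{\delta_0}\bigl(1 - p_n(1-e^{-\beta})\bigr),\qquad
\E\bigl[\mgc[n+1]/\mgc[n] \mid \Filt[n]\bigr] = e^{-\delta_0}\bigl(1 + p_n(e^{\beta}-1)\bigr).
\]

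Taking logarithms and using $\log(1-x)\le -x$ and $\log(1+x)\le x$, the supermartingale property of $\mgb$ reduces to the inequality $\delta_0 \le p_n(1-e^{-\beta})$ and that of $\mgc$ to $p_n(e^{\beta}-1) \le \delta_0$; each need only hold on $\{n < \tau_\epsilon \wedge \eta_C\}$ for all $n$ past a threshold, after which the stopped process is constant and there is nothing to check. So the whole problem is a uniform estimate valid on that event, where $1/M_n \le \epsilon^{-1}n^{-0.67}$ and hence $\beta < c(n+1)/M_n^2 = O(n^{-0.34}) \to 0$, legitimizing Taylor expansion of all the quantities in play.

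The heart of the proof is that, substituting $p_n = M_n/n - M_n^2/(4n^2)$, the leading terms of order $1/M_n$ cancel exactly between $p_n(1-e^{-\beta})$ (resp.\ $p_n(e^{\beta}-1)$) and $\delta_0$, leaving
\[
p_n(1-e^{-\beta}) - \delta_0 = \frac1n\Bigl(1-\frac c4\Bigr) + E_n,\qquad
\delta_0 - p_n(e^{\beta}-1) = \frac1n\Bigl(\frac c4 - 1\Bigr) + E_n',
\]
where $E_n, E_n'$ are each $O(n^{-1.01})$ uniformly over $\epsilon n^{0.67} \le M_n \le n$. Indeed the surviving corrections are combinations of $1/(nM_n)$, $1/M_n^2$, $n/M_n^3$ and $1/n^2$, and each of these is $o(1/n)$ precisely because the floor $M_n \ge \epsilon n^{0.67}$ uses an exponent strictly above $2/3$ (which is exactly what forces $n/M_n^3 = o(1/n)$). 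Therefore when $c<4$ the first display is eventually positive and when $c>2$ wait — when $c>4$ the second is eventually positive; choosing $n_1$ (resp.\ $n_2$) to be at least $C$ and beyond the corresponding threshold, and recalling that $\mgb[\,\cdot\wedge\tau_\epsilon\wedge\eta_C]$ (resp.\ $\mgc[\,\cdot\wedge\tau_\epsilon\wedge\eta_C]$) does not move after $\tau_\epsilon\wedge\eta_C$, gives the two claimed supermartingales.

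The main obstacle is exactly this cancellation-and-bookkeeping step: since the dominant contributions on both sides are of size $c/M_n$ and cancel, the sign of the drift is decided by the $1/n$-scale terms, so the expansion must be pushed to that order and the remaining errors controlled \emph{uniformly} all the way down to $M_n \asymp n^{0.67}$. This is precisely why the a priori bound is stated in Lemma~\ref{lem:improved_low_bound2} with an exponent comfortably larger than $2/3$, and why $\tau_\epsilon$ is defined here with the threshold $\epsilon n^{0.67}$: the exponent is small enough that $\Pr[\tau_\epsilon < \infty]\to 0$ as $\epsilon\to 0$, yet large enough to kill the error terms. Everything else — the algebra producing $\delta_0,\delta_1,\beta$, and the explicit collection of lower-order terms — is routine.
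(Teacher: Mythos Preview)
Your argument is correct and is essentially the same Taylor-expansion computation the paper carries out: both routes identify the cancellation of the $c/M_n$ terms and the residual $(1-c/4)/n$ (resp.\ $(c/4-1)/n$) drift, with the $O(n^{-1.01})$ error controlled precisely because $M_n \ge \epsilon n^{0.67}$ forces $n/M_n^3 = o(1/n)$. The only cosmetic difference is that you factor the ratio as $e^{\delta_0}$ times a second factor and pass through the log, whereas the paper expands $\Exp[\mgb[n+1]/\mgb[n]\mid\Filt[n]]$ directly; the content is identical.
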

\begin{proof}[Proof of Lemma~\ref{lem:mgs}]
Since we only consider $n \leq \eta_C$ we have that $L_n = 1$ almost surely, and hence
\(
p_n = M_n/n(1-M_n/4n)
\)
for the probability at the $n$-th step that $M_n$ increases.

\noindent \emph{Proof of (i)} We must estimate $\Exp[ \mgb[n+1] \vert \Filt ]$ for $c < 4$ under the assumption that $M_n \geq \epsilon n^{0.67}.$  As we wish to show this is a supermartingale, it suffices to show that there is a $n_0$ sufficiently large so that under these assumptions
\[
\Exp\bigl[ \tfrac{\mgb[n+1]}{\mgb[n]} \vert \Filt \bigr] \leq 1.
\]
The proof follows by Taylor expansion.
\begin{align*}
\Exp\bigl[ \tfrac{\mgb[n+1]}{\mgb[n]} \vert \Filt \bigr]
&= \frac{n}{n+1}\left[
e^{\left( \frac{c}{M_n}\right)}(1-p) + pe^{\left(c\tfrac{n+1}{M_n+1} - \tfrac{cn}{M_n}\right)}
\right] \\
&= 1-\frac{1}{n} +\frac{c}{M_n} +cp\left(\frac{-1}{M_n} + \frac{M_n - n}{M_n(M_n+1)}\right)+O\left(\frac{1}{M_n^2} + \frac{n^2p}{M_n^4}\right).
\intertext{ Noting that $p\leq M_n/n$ and that under our assumption, $M_n = \omega(j^{2/3}),$ it follows that this error term is $o(1/n).$  Substituting in the definition of $p,$ we get}
\Exp\bigl[ \tfrac{\mgb[n+1]}{\mgb[n]} \vert \Filt \bigr]
&= 1-\frac{1}{n} +\frac{c}{M_n}  -c\left(\frac{n+1}{n(M_n+1)}\right)\left(1-\frac{M_n}{4n}\right)+O\left(\frac{1}{n^{1.001}}\right). \\
&\leq 1-\frac{1}{n} +\frac{c}{4n}+O\left(\frac{1}{n^{1.001}}\right).
\end{align*}
Note that constant in the $O(\cdots)$ term depends only on $\epsilon$ and $c.$  Hence, we may find an constant $n_0>{C}$ sufficiently large so that this is always strictly less than $1,$ which completes the proof.

\noindent \emph{Proof of (ii)}  This is precisely the same calculation as was done for (i).  Once more, it suffices to show that for $c> 4,$
\[
\Exp\bigl[ \tfrac{\mgc[n+1]}{\mgc[n]} \vert \Filt \bigr] \leq 1.
\]
If we expand this expectation, we get
\[
\Exp\bigl[ \tfrac{\mgc[n+1]}{\mgc[n]} \vert \Filt \bigr]
= \frac{n+1}{n}\left[
e^{\left( \frac{-c}{M_n}\right)}(1-p) + pe^{\left(-c\tfrac{n+1}{M_n+1} + \tfrac{cn}{M_n}\right)}
\right].
\]
The same calculus shows that we have
\[
\Exp\bigl[ \tfrac{\mgc[n+1]}{\mgc[n]} \vert \Filt \bigr] =
1 + \frac{1}{n} - \frac{c}{4n} + O\left( \frac{1}{n^{1.001}} \right),
\]
so that when $c>4,$ the desired claim holds.
\end{proof}
Using the a priori estimates, we are able to use $\mgb[n]$ to prove the main theorem for $d=2$.
\begin{proof}[Proof of Theorem~\ref{thm:max_degree}]
Using these supermartingales, the proof proceeds along similar lines as in Lemma~\ref{lem:improved_low_bound2}.
Once again set $O_{\epsilon}$ to be the event $\{\tau_{\epsilon} = \infty\}.$  From Lemma~\ref{lem:improved_low_bound2} we have
\[
\liminf_{n\to\infty} M_n/n^{0.67} = \infty~\as
\]
Hence, we have that
\[
\lim_{\epsilon \to 0} \one[\inf_{n > 0} M_n/n^{0.67} \leq \epsilon] = 0~\as
\]
Thus,
\(
\lim_{\epsilon \to 0} \Pr[O_\epsilon] = 1.
\)

On the event $O_{\epsilon}\cap D_{C},$ we have by positive supermartingale convergence that there is some large $R_\epsilon$ random so that
\[
\sup_{n > 0} \mgb[n] < R_{\epsilon} < \infty.
\]
Hence, on this event,
\[
M_n \geq \frac{cn}{\log n + \log R_\epsilon},
\]
and so
\[
\liminf_{n \to \infty} \frac{M_n \log n}{n} \geq c.
\]
Thus we have that
\[
\Pr \bigl[
\bigl\{\liminf_{n \to \infty} \tfrac{M_n \log n}{n} \geq c\bigr\}
\cap O_{\epsilon}
\cap D_{C}
\bigr]
= \Pr \left[ O_\epsilon \cap D_{C}\right],
\]
and so taking $\epsilon \to 0$ and $C \to \infty$ we have that
\[
\liminf_{n \to \infty} \frac{M_n \log n}{n} \geq c~\as
\]
As this holds for any $c<4,$ we conclude the desired lower bound.

The upper bound follows by the exact same machinery.
On the event $O_\epsilon\cap D_{C},$ we have by positive supermartingale convergence that there is some large $R_\epsilon$ random so that
\[
\sup_{n > 0} \mgc[n] < R_{\epsilon} < \infty.
\]
Hence, on this event,
\[
M_n \leq \frac{cn}{\log n - \log R_\epsilon},
\]
and so
\[
\limsup_{n \to \infty} \frac{M_n \log n}{n} \leq c.
\]
Thus we have that
\[
\Pr \bigl[
\bigl\{\limsup_{n \to \infty} \tfrac{M_n \log n}{n} \leq c\bigr\}
\cap O_{\epsilon}
\cap D_{C}
\bigr]
= \Pr \left[ O_\epsilon \cap D_C \right],
\]
and so taking $\epsilon \to 0$ and $C \to \infty$ we have that
\[
\limsup_{n \to \infty} \frac{M_n \log n}{n} \leq c~\as
\]
As this holds for any $c>4,$ the proof is complete.
\end{proof}

\section{case d>2}
\label{sec:finald}

The case $d>2$ requires different analysis from the case $d=2$. Let $x_{\ast}$ be the solution of equation $1-(1-x/2)^{d}=x$ in the interval $(0,2)$. Note that by monotonicity and continuity of each side of the equation, this solution exists and is unique.  From section~\ref{sec:final2}, recall the events $D(C)=\{L_{n}=1, \;\forall n\geq C\}$, and the stopping time
\(
\eta_C=\inf_{n \geq C}\{n:L_{n} > 1\}.
\)
\begin{lemma}
\label{lem:interval_est}
Conditional on $D(C),$
for any $n_{0}>C$ and $\epsilon>0$  there is an $n_1>n_{0}$ random with $n_1$ finite almost surely so that $x_{\ast}-\epsilon<M_{N}/N<x_{\ast}+\epsilon$.
\end{lemma}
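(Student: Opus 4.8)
With $C$, $n_0$ and $\epsilon$ as given, the plan is to show that on $D(C)$ the event that $M_n/n \notin (x_\ast-\epsilon,\,x_\ast+\epsilon)$ for \emph{every} $n>n_0$ has probability zero; then the first $n_1>n_0$ at which $M_{n_1}/n_1$ lies in the interval is finite almost surely, as required. Write $z_n=M_n/n$ and $h(x)=1-(1-x/2)^d-x$, so that $h(0)=h(x_\ast)=0$, $h>0$ on $(0,x_\ast)$, $h<0$ on $(x_\ast,2]$, and --- this is where $d\geq 3$ enters --- $h'(0)=d/2-1\geq\tfrac12>0$. On $D(C)$ we have $L_n\equiv1$, hence $p_n=1-(1-z_n/2)^d=z_n+h(z_n)$, and a direct computation gives the semimartingale decomposition
\[
z_{n+1}-z_n=\frac{h(z_n)}{n+1}+\Delta N_n,\qquad \Delta N_n:=\frac{(M_{n+1}-M_n)-p_n}{n+1},
\]
where $N_n:=\sum_{k<n}\Delta N_k$ is a martingale with $\Exp[(\Delta N_k)^2\mid\Filt[k]]\leq\tfrac14(k+1)^{-2}$, so $N_n$ converges a.s.\ to a finite limit; moreover $|z_{n+1}-z_n|\leq 2/(n+1)\to 0$ and $z_n\in(0,2]$. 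To avoid conditioning on the future event $D(C)$ one runs the argument on the process stopped at $\eta_C$ and notes that on $D(C)$ this stopped process coincides with the original one.

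The auxiliary fact I would establish first --- and which I expect to be the main obstacle, as it is what handles the degeneracy of the drift at the unstable point $0$ --- is that $\limsup_n z_n>\alpha$ a.s.\ for a suitably small fixed $\alpha>0$. The key observation is that Bernoulli's inequality gives $p_n=1-(1-z_n/2)^d\geq\tfrac d2(1-\beta)\,z_n$ whenever $z_n\leq\alpha$, where $\beta=\beta(\alpha)\to 0$ as $\alpha\to 0$; the coefficient $d/2$ (in place of $1$) is precisely the improvement afforded by $d\geq3$. Substituting this into the computation of Section~\ref{sec:apriori}, for $M_n$ large and $z_n\leq\alpha$ one gets
\[
\Exp\!\left[\tfrac1{M_{n+1}}\,\middle|\,\Filt[n]\right]\;\leq\;\tfrac1{M_n}\Bigl(1-\tfrac{c}{n}\Bigr),\qquad c:=\tfrac d2(1-\beta)^2>1
\]
once $\alpha$ is small enough. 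Letting $R_n$ satisfy $R_{n+1}=R_n(1-c/n)^{-1}$, Lemma~\ref{lem:numbers} gives $R_n/n^c\to$ a positive limit; and if $m_0$ is a deterministic time large enough for the displayed bound to apply and $\sigma$ is the first $n\geq m_0$ at which $z_n>\alpha$ or $M_n$ drops below the relevant threshold, then $R_{n\wedge\sigma}/M_{n\wedge\sigma}$ is a nonnegative supermartingale, hence converges a.s. On $\{\sigma=\infty\}$ we would instead have $R_n/M_n\geq R_n/(2n)\asymp n^{c-1}\to\infty$ (using $c>1$ and $M_n\leq 2n$), a contradiction; so $\Pr[\sigma=\infty]=0$ for every such $m_0$. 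Since $M_n\to\infty$ a.s.\ by Lemma~\ref{lem:starting_low_bound}, this is exactly the assertion $\limsup_n z_n>\alpha$ a.s.

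Granting this, suppose the bad event occurred. Because $|z_{n+1}-z_n|\to0$, for $n$ large $z_n$ cannot jump across the gap $(x_\ast-\epsilon,x_\ast+\epsilon)$, so it is eventually confined to one side. If $z_n\geq x_\ast+\epsilon$ for all $n\geq n_1$, then $h(z_n)\leq-\delta_1<0$ with $\delta_1:=-\max_{[x_\ast+\epsilon,2]}h$, and summing the decomposition --- $\sum(k+1)^{-1}=\infty$ beats the convergent $N_n$ --- forces $z_n\to-\infty$, which is absurd. If instead $z_n\leq x_\ast-\epsilon$ for all $n\geq n_1$, then $h(z_n)\geq0$, so $A_n:=\sum_{k=n_1}^{n-1}h(z_k)/(k+1)=z_n-z_{n_1}-(N_n-N_{n_1})$ is nondecreasing and bounded above, hence convergent; consequently $z_n$ converges, and since $\sum(k+1)^{-1}=\infty$ while $\sum h(z_k)/(k+1)<\infty$ we must have $\liminf_n h(z_n)=0$, which on the compact set $[0,x_\ast-\epsilon]$, where $h$ vanishes only at $0$, yields $\liminf_n z_n=0$; thus $z_n\to0$. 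This contradicts $\limsup_n z_n>\alpha$. Hence the bad event is null and the lemma follows.
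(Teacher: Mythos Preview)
Your proof is correct, but the route differs from the paper's. The paper works \emph{multiplicatively}: writing $f(x)=(1-(1-x/2)^d)/x$, one has $p_n/M_n=f(z_n)/n$ with $f(x)>1+\delta$ on $[0,x_\ast-\epsilon]$ and $f(x)<1-\delta$ on $[x_\ast+\epsilon,2]$. From this, $C_n/M_n$ (resp.\ $M_n/C_n$) with $C_{n+1}=(1+(1\pm\delta)/n)C_n$ is a supermartingale while $z_n$ remains on the wrong side of $x_\ast$, and Lemma~\ref{lem:numbers} forces $M_n\gtrsim n^{1+\delta}$ (resp.\ $M_n\lesssim n^{1-\delta}$), contradicting $M_n\leq 2n$ (resp.\ $M_n>(x_\ast+\epsilon)n$). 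The two sides are handled symmetrically and there is no difficulty at $0$, because in the multiplicative scale the relevant quantity is $f(z_n)$, which stays uniformly above $1$ all the way down to $z_n=0$.

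You instead use the additive stochastic-approximation decomposition $z_{n+1}-z_n=h(z_n)/(n+1)+\Delta N_n$. This handles the upper side cleanly ($h\leq-\delta_1$ there), but on the lower side $h$ degenerates at the unstable fixed point $0$, so the drift argument alone only yields $z_n\to 0$; you then need your auxiliary fact $\limsup z_n>\alpha$ to finish. That auxiliary step is exactly the paper's multiplicative supermartingale, specialized to a neighborhood of $0$ where $f(x)\approx d/2>1$. In effect you use the ODE method globally and patch the degenerate point with the paper's method locally, whereas the paper uses the multiplicative supermartingale throughout. Your approach has the virtue of making the Robbins--Monro structure explicit (as the paper itself advertises in the introduction), at the cost of this extra step; the paper's is shorter and more symmetric. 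Note, incidentally, that since $f>1+\delta$ on all of $[0,x_\ast-\epsilon]$, your auxiliary supermartingale already works with $\sigma=\inf\{n:z_n>x_\ast-\epsilon\}$ rather than $\inf\{n:z_n>\alpha\}$, which would let you dispense with the additive argument on the lower side altogether and recover the paper's proof.
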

\begin{proof}

The statement of the lemma is equivalent to the statement that for any $n_{0}$ and $\epsilon>0$ there is $n_{1}\geq n_{0}$ and $n_{2}\geq n_{0}$ such that $x_{\ast}-\epsilon\leq M_{n_{1}}/n_{1}$ and $M_{n_{2}}/n_{2}\leq x_{\ast}+\epsilon$; as the process has bounded increments, if such $n_1$ and $n_2$ exist, there must be a time in between that satisfies the statement of the lemma, provided $n$ is taken sufficiently large.

Recall that $p_n$ is the probability that $M_{n+1} = M_n + 1$ conditional on $\filt_n.$ Note that for $n$ with $C \leq n \leq \eta_C,$
\[
p_{n}
=1-\left(1-\frac{M_{n}}{2n}\right)^{d}
=\frac{M_{n}}{2n}\left(\sum_{i=0}^{d-1}(1-\frac{M_{n}}{2n})^{i}\right).
\]
Hence if we define the function
\[
f(x)=\frac 1 2 \sum_{i=0}^{d-1}(1-x/2)^{i},
\]
then $\frac{p_{n}}{M_{n}}=\frac{1}{n}f(\frac{M_{n}}{n})$. If $x\neq 0$ this function is equal to $\frac{1-(1-x/2)^{d}}{x}$. Therefore $x_{\ast}$ is the solution of equation $f(x)=1$ in the interval $(0,1)$. 
Note that for any $\epsilon>0$ there is a $\delta>0$ so that $f(x)>1+\delta$ if $0\leq x\leq x_{\ast}-\epsilon$ and $f(x)<1-\delta$ if $x_{\ast}+\epsilon\leq x\leq 1$. 

We will start by proving the lower bound.  
Assume that for $n_{0},$ $x_{\ast}-\epsilon>M_{n_{0}}/n_{0}$ (otherwise we could just put $n_{1}=n_{0}$). Let $\phi_{1}$ be the first moment after $n_{0}$ such that $x_{\ast}-\epsilon\leq M_{\phi_{1}}/\phi_{1}$. We need to prove that conditional on the event $\eta_C = \infty,$ $\phi_{1}<\infty$. 
Consider the expectation
\begin{align*}
\Exp\left(\frac{M_{n}}{M_{n+1}} \mid \filt_{n}\right)
&=p_{n}\frac{M_{n}}{M_{n}+1}+1-p_{n} 
=p_{n}\left(1-\frac{1}{M_{n}+1}\right)+1-p_{n} \\
&=1-\frac{p_{n}}{M_{n}}+O(M_{n}^{-2}) 
=1-\frac{1}{n}f\left(\tfrac{M_n}{n}\right)+O(M_{n}^{-2}).
\end{align*}
Thus, by the monotonicity of $f(x)$ there is a $\delta>0$ such that 
\[
\Exp\left(\frac{1}{M_{n+1}} \mid \filt_{n} \right)<\frac{(1-(1+\delta/2)/n)}{M_{n}},
\] 
provided $n \geq n_0$ for some large $n_0$ and $n \leq \phi_1 \wedge \eta_C.$  Setting $C_{n+1}=(1+(1+\delta)/n)C_{n},$ $n>n_{0}$, we have that $A_n = C_n/M_n$ is a supermartingale for this same range of $n.$ 
By Lemma~\ref{lem:numbers} we have that $C_nn^{-1-\delta}$ converges to a positive limit, and   
by Doob's theorem $A_{n\wedge\phi_{1}\wedge \eta_C}$ tends to a finite limit with probability 1.
Thus there is a random constant $B > 0$ so that $M_n \geq B n^{1+\delta}$ for all $n \leq \phi_1 \wedge \eta_C.$  On the other hand, $M_n \leq 2n,$ and so it must be that $\phi_1 \wedge \eta_C < \infty$ almost surely.  Thus, on the event that $\eta_C = \infty,$ we have $\phi_1 < \infty,$ and so we can put $n_{1}=\phi_{1}$.

Now we turn to the upper bound, which proceeds by nearly the same argument, though using a different supermartingale.
To that end, consider the expectation:
\[
\Exp\left(\frac{M_{n+1}}{M_{n}} \mid \filt_{n}\right)
=\frac{p_n(M_{n}+1)}{M_{n}}+1-p_{n}
=1+\frac{p_{n}}{M_{n}}.
\]
Assume that for $n_{0},$ $x_{\ast}+\epsilon<M_{n_{0}}/n_{0}$ (otherwise we could just put $n_{2}=n_{0}$). Let $\phi_{2}$ be the first moment after $n_{0}$, such that $x_{\ast}+\epsilon\geq M_{\phi_{2}}/\phi_{2}$. We need to prove that on the event $\eta_C = \infty,$ $\phi_{2}<\infty$. 

Lemma \ref{lem:improved_low_bound2} and the monotonicity of $f(x)$ imply that if $x_{\ast}+\epsilon<M_{n}/n$ and if $n$ is large enough, then there is a $\delta>0$ such that $\Exp(M_{n+1}|\filt_{n})<(1+(1-\delta)/n)M_{n}$. Therefore $M_{n}/C_{n}$ is supermartingale for $n_{0}\leq n<\phi_{2}$, where $C_{n+1}=(1+(1-\delta)/n)C_{n},$ $n>n_{0}$. By Lemma~\ref{lem:numbers} we have that $C_nn^{-1+\delta}$ converges to a positive limit.  Setting $A_n = M_n/C_n,$ we have by Doob's theorem $A_{n\wedge\phi_{2}\wedge \eta_C}$ tends to a finite limit with probability 1, and in particular, there is a random constant $B > 0$ so that $M_n \leq B n^{1-\delta}$

However, for $n \leq \phi_2,$ we have that $M_n > (x_{\ast} + \epsilon)n,$ and so it must be that $\phi_2 \wedge \eta_C < \infty.$ Thus conditional on $\eta_C= \infty,$ we have $\phi_2 < \infty,$ which completes the proof.
\end{proof}

Now we need an auxiliary lemma about the sum of independent variables.
\begin{lemma}
\label{lem:sum_est}
Let $S_n$ denote a random walk with independent centered increments bounded by $1.$  For any $\alpha > 0$ there is a $c > 0$ so that for any $m \geq 0$
\[
\Pr \left[
\exists~n : S_n > \alpha n + m
\right] \leq ce^{-\alpha m/c}.
\]
\end{lemma}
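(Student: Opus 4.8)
The plan is to run the classical exponential-supermartingale estimate. Write $S_n = \sum_{i=1}^n \xi_i$ with $S_0 = 0$, where the $\xi_i$ are independent, $\Exp \xi_i = 0$ and $|\xi_i| \le 1$. For a parameter $\lambda > 0$ to be fixed, Hoeffding's lemma applied to a centered variable supported in $[-1,1]$ gives $\Exp[e^{\lambda \xi_i}] \le e^{\lambda^2/2}$, and so, by independence of the increments,
\[
N_n := \exp\Bigl(\lambda S_n - \tfrac{n\lambda^2}{2}\Bigr)
\]
is a nonnegative supermartingale with $\Exp[N_0] = 1$.

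Next I would convert a crossing of the line $\alpha n + m$ into a large value of $N_n$. On the event $\{\exists\, n : S_n > \alpha n + m\}$ there is an index $n$ for which
\[
N_n > \exp\Bigl(\lambda(\alpha n + m) - \tfrac{n\lambda^2}{2}\Bigr) = \exp\Bigl(\lambda m + n\bigl(\lambda\alpha - \tfrac{\lambda^2}{2}\bigr)\Bigr).
\]
Choosing $\lambda = \alpha$ makes the coefficient of $n$ equal to $\alpha^2/2 \ge 0$, so on this event $\sup_k N_k \ge N_n > e^{\alpha m}$. Doob's maximal inequality for nonnegative supermartingales then yields
\[
\Pr\bigl[\exists\, n : S_n > \alpha n + m\bigr] \le \Pr\bigl[\sup_k N_k \ge e^{\alpha m}\bigr] \le \Exp[N_0]\,e^{-\alpha m} = e^{-\alpha m},
\]
so the lemma holds, in fact with $c = 1$ uniformly in $\alpha$.

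I do not anticipate a genuine obstacle; this is a textbook maximal-inequality argument. The only items requiring a moment's care are using the correct sub-Gaussian constant $\lambda^2/2$ for increments in $[-1,1]$ (from Hoeffding's lemma) and appealing to the supermartingale form of Doob's maximal inequality, which is legitimate because $N_n \ge 0$. If one prefers to avoid martingale language altogether, a union bound over $n$ together with the Chernoff estimate $\Pr[S_n > \alpha n + m] \le e^{-\lambda(\alpha n + m) + n\lambda^2/2}$ and summation of the resulting geometric series in $n$ gives the same type of bound, though with a constant $c$ depending on $\alpha$.
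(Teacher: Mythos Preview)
Your argument is correct. In fact, your choice $\lambda=\alpha$ together with Doob's maximal inequality for the nonnegative supermartingale $N_n=\exp(\alpha S_n-n\alpha^2/2)$ yields the clean bound $e^{-\alpha m}$, so you may take $c=1$ independently of $\alpha$, which is stronger than the statement actually claims.

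This is a genuinely different route from the paper's. The paper does precisely the alternative you sketch at the end: it applies Hoeffding's inequality at each fixed time to get $\Pr[S_n>\alpha n+m]\le\exp(-c(\alpha n+m)^2/n)\le e^{-c\alpha^2 n}e^{-2c\alpha m}$ and then sums the resulting geometric series in $n$ via a union bound. That approach is slightly more elementary in that it avoids any martingale machinery, but it produces a constant $c$ that depends on $\alpha$ through the factor $(1-e^{-c\alpha^2})^{-1}$. Your supermartingale argument packages the whole maximal event in one step, avoids the union bound entirely, and delivers a uniform constant; the paper's approach, on the other hand, is the one a reader unfamiliar with Doob's inequality would reconstruct most readily. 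Either is entirely adequate for how the lemma is used downstream.
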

\begin{proof}
For a fixed $n$, we have by Hoeffding's inequality that there is a $c > 0$ so that
\[
\Pr \left[
S_n > \alpha n + m
\right] \leq \exp(-c (\alpha n + m)^2/n) \leq \exp(-c\alpha^2 n) \exp(-2c\alpha m).
\]
Summing this over $n,$ we get
\[
\Pr \left[
\exists~n : S_n > \alpha n + m
\right] \leq \frac{\exp(-2c\alpha m)}{1-\exp(-c\alpha^2)},
\]
so that adjusting $c,$ we have the desired bound.
\end{proof}

Using this lemma we will prove next result:
\begin{lemma}
\label{lem:final_est}
With probability $1,$
\(
M_n/n \to x_\ast.
\)
\end{lemma}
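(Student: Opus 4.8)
The plan is to combine the recurrence result of Lemma~\ref{lem:interval_est} with a random-walk excursion argument built on Lemma~\ref{lem:sum_est}, conditioning throughout on the event $D(C)$ and then letting $C \to \infty$ (recall $\Pr[\cup_C D(C)] = 1$ by Proposition~\ref{prop:persistent_hub}). First I would fix $\epsilon > 0$ and work on $D(C)$, so that $L_n = 1$ for all $n \geq C$ and hence $M_{n+1} - M_n \sim \Bernoulli(p_n)$ with $p_n = \tfrac{M_n}{n} f(M_n/n)$, where $f$ is the function from Lemma~\ref{lem:interval_est}. The key structural fact is the sign of the drift: writing $Z_n = M_n/n$, a one-step computation gives $\Exp[Z_{n+1} - Z_n \mid \filt_n] = \tfrac{1}{n+1}\bigl(p_n - Z_n\bigr) + O(n^{-2}) = \tfrac{Z_n}{n+1}\bigl(f(Z_n) - 1\bigr) + O(n^{-2})$, so the drift of $Z_n$ points toward $x_\ast$ from both sides, and its size is $\Theta(1/n)$ times a quantity bounded away from zero once $|Z_n - x_\ast| \geq \epsilon$.

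Next I would turn this drift estimate into a confinement statement. Suppose that at some (random, a.s.\ finite by Lemma~\ref{lem:interval_est}) time $m$ we have $Z_m \in (x_\ast - \epsilon, x_\ast + \epsilon)$. I want to show that with probability tending to $1$ as $m \to \infty$, the process stays in the slightly larger window $(x_\ast - 2\epsilon, x_\ast + 2\epsilon)$ forever after, or at least returns quickly if it leaves. Decompose $M_n - M_m = \sum_{k=m}^{n-1}(p_k + \xi_k)$ where $\xi_k = (M_{k+1} - M_k) - p_k$ are bounded martingale increments. On the event that $Z_k$ stays in, say, $[x_\ast - 2\epsilon, x_\ast - \epsilon]$ throughout a stretch, the deterministic part $\sum p_k \geq \sum \tfrac{M_k}{k}(1 + \delta) \geq (x_\ast - 2\epsilon)(1+\delta)\sum \tfrac{1}{k}$ pushes $M_n/n$ upward at rate $\approx (1+\delta) Z_n$ per unit of $\log n$, which by a Grönwall-type comparison forces $Z_n$ up out of the window unless the martingale part $\sum \xi_k$ compensates by an amount of order $\epsilon \cdot (n - m)/n$; and Lemma~\ref{lem:sum_est} (applied after a time-change $t = \log n$, under which the increments are still bounded and centered) shows this has probability at most $c e^{-\alpha \log m / c} = c m^{-\alpha/c}$. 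The symmetric argument with $C_n = (1 + (1-\delta)/n)^{-1}\cdots$-type comparison handles the upper side. Summability of $m^{-\alpha/c}$ over the (infinitely many, but with rapidly growing return times forced by the drift) excursion attempts, via Borel--Cantelli, then gives that only finitely many excursions out of the $2\epsilon$-window occur; combined with Lemma~\ref{lem:interval_est} guaranteeing the process is in the $\epsilon$-window infinitely often, we conclude $\limsup_n |Z_n - x_\ast| \leq 2\epsilon$ a.s.\ on $D(C)$. Taking $\epsilon \to 0$ along a countable sequence and $C \to \infty$ finishes the proof.

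The main obstacle is making the excursion-control step rigorous: the number of excursions out of the $2\epsilon$-window is itself random and a priori could be infinite, so a naive Borel--Cantelli over "excursions" is circular. The clean way around this, which I would adopt, is to not count excursions but instead to bound directly, for each fixed starting time $m$, the probability that $\sup_{n \geq m}$ of the relevant excursion functional ever exceeds a threshold — i.e.\ apply Lemma~\ref{lem:sum_est} to the single infinite-horizon event $\{\exists n \geq m : S_{n} - S_m > \alpha(n-m) + 0\}$ after the logarithmic time change, getting a bound $c e^{-(\text{const})\log m}$ that is summable in $m$ — and then apply Borel--Cantelli to the deterministic sequence of times $m$. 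Once one knows that from some a.s.-finite time onward the martingale part never again produces a large deviation, the deterministic drift argument confines $Z_n$ to the $2\epsilon$-window unconditionally. A secondary technical point to be careful about is that all these supermartingale/drift comparisons are only valid up to the stopping time $\eta_C$, so every statement must be made on $D(C) = \{\eta_C = \infty\}$ and the error terms $O(M_n^{-2}) = O(n^{-2})$ (legitimate since $M_n = \omega(n^{2/3})$ by Lemma~\ref{lem:improved_low_bound2}) must be absorbed into the $\delta$-slack; this is routine given the a priori estimates already in hand.
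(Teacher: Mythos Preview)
Your overall strategy---recurrence into a neighborhood of $x_\ast$ via Lemma~\ref{lem:interval_est}, a crossing estimate via Lemma~\ref{lem:sum_est}, and Borel--Cantelli over a deterministic sequence of times (you correctly identify and resolve the circularity of counting random excursions)---is exactly the paper's. The gap is in your application of Lemma~\ref{lem:sum_est}. You propose to apply it, after a logarithmic time change, to the event $\{\exists\,n\ge m:\ S_n-S_m>\alpha(n-m)+0\}$ and claim a bound $ce^{-(\mathrm{const})\log m}$. But in Lemma~\ref{lem:sum_est} the parameter appearing in the exponent is the \emph{offset} (the additive constant on the right), not a starting time; with offset $0$ the lemma yields only a constant bound. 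The logarithmic time change does not manufacture an offset of order $\log m$, and even if you obtained a polynomial bound $m^{-c'}$, summability would require $c'>1$, which you have not controlled. As written, the Borel--Cantelli step does not close.

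The paper avoids any time change by working with $M_n$ directly and arranging a \emph{linear} offset. For each deterministic $n$ it considers the crossing event $\mathcal{A}_n$ that $M_n/n$ lies in a narrow band $I^\epsilon=(x_\ast+\epsilon/2,\,x_\ast+3\epsilon/4)$ and that the process subsequently exits the wider band $(x_\ast+\epsilon/4,\,x_\ast+\epsilon)$ through the top. Inside the wider band the increment probability satisfies $p_k=q(M_k/k)\le x_\ast+\epsilon-\alpha$ for some fixed $\alpha>0$, so $(M_{n+i})_i$ is stochastically dominated by a walk $S_i$ with drift $x_\ast+\epsilon-\alpha$; exiting upward at some time $n+i$ requires the centered walk $\tilde S_i=S_i-S_0-(x_\ast+\epsilon-\alpha)i$ to exceed $\alpha i+\bigl(n(x_\ast+\epsilon)-S_0\bigr)\ge \alpha i+\tfrac{\epsilon}{4}n$. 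Now Lemma~\ref{lem:sum_est} applies with offset $\tfrac{\epsilon}{4}n$, giving $\Pr[\mathcal{A}_n]\le ce^{-c'n}$, which is summable. The equivalence ``$M_n/n>x_\ast+\epsilon$ i.o.\ iff $\mathcal{A}_n$ i.o.'' holds because bounded increments together with the recurrence from Lemma~\ref{lem:interval_est} force every upward excursion above $x_\ast+\epsilon$ to pass through $I^\epsilon$ at some earlier time $n$. The fix to your argument is simply to drop the time change and recognize that the head start $M_n$ must overcome to escape is of order $\epsilon n$; this is the offset that makes Lemma~\ref{lem:sum_est} effective.
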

\begin{proof}

We will show that for each $\epsilon > 0,$ $M_n/n > x_\ast + \epsilon$ only finitely many times with probability $1$.  The argument to show that it is less than $x_\ast - \epsilon$ only finitely many times is identical.  Together, both statements complete the proof.
For any $\epsilon >0,$ let $\INE$ denote the interval $((x_\ast + \epsilon/2), (x_\ast + 3\epsilon/4)).$
For any $n,$ let $\tau_n$ be the first time greater than $n$ that $M_n/n < x_\ast + \epsilon/4$ or $M_n/n > x_\ast + \epsilon.$  Call $\mathcal{A}_n$ the event
\[
\mathcal{A}_n =
\{
M_n/n \in \INE, M_{\tau_n} > \tau_n(x_\ast + \epsilon), \tau_n < \eta_n
\}
\]
As with probability $1,$ there is an $N$ so that $\eta_N = \infty,$
then by virtue of Lemma~\ref{lem:interval_est}, $M_n/n$ is larger than $x_\ast + \epsilon$ infinitely often if and only if $\mathcal{A}_n$ occurs infinitely often.

Set $q(x) = 1-(1-x/2)^d.$  From the monotonicity of $q$ and the definition of $x_\ast,$ we have that $q(x) < x_\ast$ for $x > x_\ast$ and $q(x) > x_\ast$ for $x < x_\ast.$ In particular, we have that
\[
\inf_{ x \in \INE } |q(x) - x| = \alpha > 0.
\]
Now, given that $M_n/n \in \INE,$ then for any $k \leq \tau_n \wedge \eta_n$ with $k \geq n,$ we have that
\[
\Pr\left[
M_{k+1} = M_k + 1 \mid \filt_k
\right] = q(M_k/k) \leq x_\ast + \epsilon - \alpha.
\]
Thus, $(M_{n+i})_{i=0}^{\tau_n \wedge \eta_n}$ is dominated from above by a simple random walk $S_i$ with constant drift $x_\ast + \epsilon - \alpha.$ It follows that we have the bound
\begin{align*}
\Pr\left[
M_{\tau_n}/\tau_n > x_\ast + \epsilon, \tau_n \leq \eta_n \mid M_n/n \in \INE
\right] 
\hspace{-1in}&\hspace{1in} \\
&\leq
\Pr\left[
\exists~i~: S_i > (n+i)(x_\ast + \epsilon) \mid S_0/n \in \INE
\right].
\end{align*}
We now write $\tilde S_i = S_i - S_0 - (x_\ast + \epsilon - \alpha)i,$ a simple random walk without drift started from $0.$  Applying Lemma~\ref{lem:sum_est} we get that
\begin{align*}
\Pr\left[
\exists~i~: S_i > (n+i)(x_\ast + \epsilon) \mid S_0/n \in \INE
\right]
\hspace{-1in}&\hspace{1in} \\
&=\Pr\left[
\exists~i~: \tilde S_i > (n(x_\ast + \epsilon) - S_0) + \alpha i \mid S_0/n \in \INE
\right] \\
&\leq e^{-c\alpha n (x_\ast + \epsilon)},
\end{align*}
for all $n$ sufficiently large.  Thus, applying Borel-Cantelli, we get that
\[
\Pr \left[
\mathcal{A}_n~\text{i.o.}
\right] = 0.
\]
The same argument shows that $M_n/n$ is not below $x_\ast - \epsilon$ infinitely often, completing the proof.
\end{proof}
\section*{Acknowledgements.}
The authors are grateful to Professor Itai Benjamini for helpful conversations.
\bibliographystyle{alpha}
\bibliography{po2c_pa}

\end{document}